\newenvironment{proof}{\paragraph{Proof:}}{\hfill$\square$}
\newtheorem{lemma}{Lemma}[section]
\newtheorem{corollary}[lemma]{Corollary}
\newtheorem{theorem}[lemma]{Theorem}
\newtheorem{remark}[lemma]{Remark}
\newtheorem{proposition}[lemma]{Proposition}
\newcommand{\R}{\mathbb{R}}
\newcommand{\N}{\mathbb{N}}
\newcommand{\eps}{\varepsilon}
\newtheorem{hyp}{Hypotheses}
\begin{document}

\title{Kernel estimates for\\ Schr\"odinger type operators\\ with unbounded diffusion and potential terms}

\author{Anna Canale, Abdelaziz Rhandi and Cristian Tacelli}


\maketitle

\abstract{
We prove that the heat kernel associated to the Schr\"odinger type operator
$A:=(1+|x|^\alpha)\Delta-|x|^\beta$ satisfies the estimate
$$k(t,x,y)\leq c_1e^{\lambda_0t}e^{c_2t^{-b}}\frac{(|x||y|)^{-\frac{N-1}{2}-\frac{\beta-\alpha}{4}}}{1+|y|^\alpha}
e^{-\frac{2}{\beta-\alpha+2}|x|^{\frac{\beta-\alpha+2}{2}}}
e^{-\frac{2}{\beta-\alpha+2}|y|^{\frac{\beta-\alpha+2}{2}}}
$$
for $t>0,|x|,|y|\ge 1$, where $c_1,c_2$ are positive constants and $b=\frac{\beta-\alpha+2}{\beta+\alpha-2}$ provided that
$N>2,\,\alpha\geq 2$ and $\beta>\alpha-2$. We also obtain an estimate of the eigenfunctions of $A$.
}
\\{\bf keywords.}{ Schr\"odinger type operator,  Semigroup, heat kernel estimates}
\\{\bf Mathematics Subject Classification (2010).}{ 35K08, 35K10, 35J10, 47D07}

\section{Introduction}

In this paper we consider the operator
\[
Au(x)=(1+|x|^\alpha)\Delta u(x) -|x|^\beta u(x),\quad x\in \R^N,
\]
for $N>2,\,\alpha\geq 2$ and $\beta>\alpha-2$. We propose to study the behaviour of the associated heat kernel and associated eigenfunctions.

Recently several paper have dealt with elliptic operators with polynomially
growing diffusion coefficients
(see for example \cite{met-spi2}, \cite{met-spi3}, \cite{met-spi-tac}, \cite{met-spi-tac2},
\cite{lor-rhan}, \cite{for-lor},
\cite{can-rhan-tac1}, \cite{kunze-luc-rha1}, \cite{kunze-luc-rha2},
\cite{can-tac1},
\cite{dur-man-tac1}).

In \cite{lor-rhan} (resp. \cite{can-rhan-tac1})
it is proved that the realization $A_p$
of $A$ in $L^p(\R^N)$ for $1<p<\infty$ with domain
$$
D_p(A)=\{u\in W^{2,p}(\R^N)\;|\; (1+|x|^\alpha)|D^2u|,(1+|x|^\alpha)^{1/2}\nabla u,|x|^\beta u\in L^p(\R^N) \}
$$
 generates a strongly continuous and analytic semigroup $T_p(\cdot)$ for $\alpha\in [0,2]$ and $\beta >0$ (resp. $\alpha>2$ and $\beta >\alpha -2$).
This semigroup is also consistent, irreducible and ultracontractive. For the case $\beta=0$ we refer to \cite{for-lor} and \cite{met-spi2}.

Since the coefficients of the operator $A$ are locally regular it follows that the semigroup $T_p(\cdot)$
admits an integral representation through a heat kernel $k(t,x,y)$
\[
 T_p(t)u(x)=\int_{\R^N} k(t,x,y)u(y)dy\;,\qquad t>0,\;x\in \R^N\;,
\]
for all $u\in L^p(\R^N)$ (cf. \cite{ber-lor}, \cite{met-wack}).

In \cite{lor-rhan} estimates of the kernel $k(t,x,y)$ for $\alpha\in [0,2)$ and $\beta>2$ were obtained.
Our contribution in this paper is to show similar upper bounds for the case $\alpha\geq 2$ and $\beta>\alpha-2$.
Our techniques consist in providing upper and lower estimates for the ground state of $A_p$ corresponding to the largest eigenvalue $\lambda_0$
and adapting the arguments used in \cite{davies}.
\medskip

The paper is structured as follows. In section \ref{sc:eigenfunction} we prove that the eigenfunction $\psi(x)$ associated to 
the largest eigenvalue $\lambda_0$ can be estimated from below and above by the function 
$$|x|^{-\frac{N-1}{2}-\frac{\beta-\alpha}{4}} e^{-\frac{2}{\beta-\alpha+2}|x|^{\frac{\beta-\alpha+2}{2}}}$$
for $|x|\ge 1$.

In Section \ref{sc:estimates-eigenfunction} we  introduce the
measure $d\mu=(1+|x|^\alpha)^{-1}dx$ for which the operator $A$ is symmetric
and generates an analytic semigroup (which is a Markov semigroup) with kernel
\[
k_\mu(t,x,y)=(1+|x|^\alpha)k(t,x,y).
\]
Adapting the arguments used in \cite{davies} and \cite{lor-rhan}, we show
the following intrinsic ultracontractivity
\[
k_\mu(t,x,y)\leq c_1 e^{\lambda_0 t}e^{c_2 t^{-b}}\psi(x)\psi(y), \quad t>0,\,x,\,y\in \R^N,
\]
where $c_1,c_2$ are positive constant, $b=\frac{\beta-\alpha+2}{\beta+\alpha-2}$, provided that $N>2,\,\alpha\geq 2$ and $\beta>\alpha-2$. So one deduces the heat kernel estimate
\begin{align*}
&k(t,x,y)\leq c_1e^{\lambda_0t}e^{c_2t^{-b}}\\
&\qquad \times 
\left(|x|^{-\frac{N-1}{2}-\frac{\beta-\alpha}{4}} 
    e^{-\frac{2}{\beta-\alpha+2}|x|^{\frac{\beta-\alpha+2}{2}}}
    \right)
\left(\frac{|y|^{-\frac{N-1}{2}-\frac{\beta-\alpha}{4}}}{1+|y|^\alpha}
    e^{-\frac{2}{\beta-\alpha+2}|y|^{\frac{\beta-\alpha+2}{2}}}
    \right)
\end{align*}
for $t>0,|x|,|y|\ge 1$. As an application we obtain the behaviour of all eigenfunctions of $A_p$ at infinity. With respect to $t$ we prove the following sharp estimates
\begin{align*}
k_\mu(t,x,y)\leq Ct^{-\frac{N}{2}}\left(1+|x|^\alpha \right)^{\frac{2-N}{4}}\left(1+|y|^\alpha \right)^{\frac{2-N}{4}}
\end{align*}
for $0<t\le 1$ and $x,\,y\in \R^N$. Here we use the results in \cite{met-spi3} and weighted Nash inequalities introduced in \cite{bakry}.
We end this section by giving a brief description of how to extend the heat kernel estimates to a more general class of elliptic operators in divergence form.

In the sequel we denote by $B_R\subset\R^N$ the open ball, centered at $0$ with radius $R>0$.

\section{Estimate of the ground state $\psi$}\label{sc:eigenfunction}
We begin by estimating the eigenfunction $\psi$ corresponding to the largest eigenvalue $\lambda_0$ of $A$.
First we recall some spectral properties obtained in \cite{can-rhan-tac1} and \cite{lor-rhan}.
\begin{proposition} \label{comp}
Assume $N>2,\,\alpha\geq2$  and $\beta>\alpha-2$ then
\begin{enumerate}
\item[(i)] the resolvent of $A_p$ is compact in $L^p(\R^N)$;
\item[(ii)] the spectrum of $A_p$ consists of a sequence of negative real eigenvalues which accumulates at $-\infty$.
 Moreover, $\sigma(A_p)$ is independent of $p$;
\item[(iii)] the semigroup $T_p(\cdot)$ is irreducible, the eigenspace corresponding to the largest eigenvalue
    $\lambda_0$ of $A_p$ is one-dimensional and is spanned by strictly positive functions $\psi$,
 which is radial, belongs to $C_b^{1+\nu}(\R^N)\cap C^2(\R^N)$ for any $\nu \in (0,1)$ and tends to $0$ when $|x|\to
 \infty$.
\end{enumerate}
\end{proposition}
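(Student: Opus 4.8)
The plan is to obtain the three assertions from the results on $A_p$ already recalled in the Introduction — that $A_p$ generates a positive, consistent, irreducible and ultracontractive analytic semigroup $T_p(\cdot)$ on $L^p(\R^N)$ with the explicit domain $D_p(A)$ — combined with the spectral theory of operators with compact resolvent and the Krein--Rutman / de Pagter theory of positive irreducible semigroups. This is essentially the route followed in \cite{lor-rhan} and \cite{can-rhan-tac1}; I only indicate the main points.

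For (i), I would show that the embedding $D_p(A)\hookrightarrow L^p(\R^N)$ is compact; since $R(\lambda,A_p)$ maps $L^p(\R^N)$ boundedly into $D_p(A)$ for $\lambda$ in the resolvent set, compactness of $R(\lambda,A_p)$ follows at once. Let $(u_n)$ be bounded in $D_p(A)$. By definition it is bounded in $W^{2,p}(B_R)$ for every $R>0$, so Rellich--Kondrachov together with a diagonal argument yields a subsequence converging in $L^p_{\mathrm{loc}}(\R^N)$; moreover the uniform bound on $\||x|^\beta u_n\|_p$ gives $\int_{|x|\ge R}|u_n|^p\,dx\le R^{-\beta p}\||x|^\beta u_n\|_p^p\to 0$ uniformly in $n$ as $R\to\infty$ (recall $\beta>\alpha-2\ge 0$), so the convergence is in fact in $L^p(\R^N)$.

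For (ii), compactness of the resolvent immediately makes $\sigma(A_p)$ a set of isolated eigenvalues of finite multiplicity without finite accumulation point. Reality, negativity, and the existence of infinitely many eigenvalues I would read off on $L^2(\R^N,d\mu)$ with $d\mu=(1+|x|^\alpha)^{-1}dx$: there $A$ is the self-adjoint operator associated with the positive form $\mathfrak{a}(u,v)=\int_{\R^N}\nabla u\cdot\nabla v\,dx+\int_{\R^N}|x|^\beta uv\,d\mu$, which has compact resolvent (by the same kind of weighted embedding argument, now into $L^2(\R^N,d\mu)$) and satisfies $\mathfrak{a}(u,u)>0$ for $u\ne 0$; the spectral theorem then gives an orthonormal basis of eigenfunctions with eigenvalues in $(-\infty,0)$ diverging to $-\infty$. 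Independence of $p$ follows from consistency of the $T_p(\cdot)$ together with ultracontractivity: each eigenfunction lies in $L^1\cap L^\infty$, hence in every $L^p(\R^N)$, so $\sigma(A_p)$ does not depend on $p\in(1,\infty)$ (cf. \cite{davies}).

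For (iii), irreducibility of $T_p(\cdot)$ is among the recalled facts; together with positivity and the compactness from (i), the Krein--Rutman theorem applied to $R(\lambda,A_p)$ for $\lambda$ large (equivalently de Pagter's theorem) shows that the largest eigenvalue $\lambda_0=\sup\sigma(A_p)$ has a one-dimensional eigenspace spanned by a function $\psi$ that can be chosen strictly positive. Radiality follows from the rotational invariance of $A$: for $R\in O(N)$ the function $x\mapsto\psi(Rx)$ is again a strictly positive eigenfunction for $\lambda_0$, hence equals $c_R\psi$ with $c_R>0$, and $\|\psi(R\,\cdot)\|_p=\|\psi\|_p$ forces $c_R=1$, i.e. $\psi$ is radial. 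Interior Schauder estimates — applicable since $1+|x|^\alpha\ge 1$ makes the operator locally uniformly elliptic with locally Hölder coefficients — give $\psi\in C^2(\R^N)$, and combined with the global boundedness and decay of $\psi$ (coming from $\psi\in L^p(\R^N)$ via the ultracontractive bounds, or a Moser iteration) yield $\psi\in C^{1+\nu}_b(\R^N)$ and $\psi(x)\to 0$ as $|x|\to\infty$; the precise rate of this decay is exactly what Section~\ref{sc:eigenfunction} establishes. The genuinely technical point is the compact embedding used in (i), and in particular the uniform control of the $L^p$ tails together with its $L^2(\R^N,d\mu)$ counterpart needed in (ii); once these are available, the Krein--Rutman argument for the simplicity and positivity of $\psi$ and the symmetry and regularity statements are routine.
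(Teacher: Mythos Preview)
The paper does not actually prove this proposition: the sentence immediately preceding it says ``First we recall some spectral properties obtained in \cite{can-rhan-tac1} and \cite{lor-rhan}'', and the statement is then given with no proof. So there is no argument in the paper to compare your proposal against; the authors simply quote the result from the references.

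That said, your sketch is sound and is essentially the standard route used in those references. The compactness argument in (i) via Rellich--Kondrachov on balls together with the tail control $\int_{|x|\ge R}|u|^p\,dx\le R^{-\beta p}\||x|^\beta u\|_p^p$ (legitimate here since $|x|^\beta u\in L^p$ is part of the domain and $\beta>0$) is exactly how compactness of the embedding $D_p(A)\hookrightarrow L^p$ is obtained; the use of the symmetric form on $L^2_\mu$ in (ii) is precisely what the present paper itself sets up in Section~\ref{sc:estimates-eigenfunction} (Lemma~\ref{coerenza1}), and the $p$-independence via consistency and ultracontractivity is the standard Davies argument; for (iii), Krein--Rutman/de~Pagter from irreducibility and positivity, radiality from rotational invariance of $A$, and interior Schauder estimates for the regularity are again the expected ingredients. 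The only place where a bit more care is warranted is the passage from $\psi\in L^p$ to $\psi(x)\to 0$ as $|x|\to\infty$: one first needs boundedness (e.g.\ from ultracontractivity or Moser iteration), and then uniform continuity from the $C^{1+\nu}_b$ regularity forces the decay; you already indicated this, so the sketch is complete at the level of a recall-type proposition.
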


We can now prove upper and lower estimates for $\psi$. We note here that the proof of \cite[Proposition 3.1]{lor-rhan} cannot be adapted to our situation. So, we propose to use another technique to estimate $\psi$.

\begin{proposition}   \label{primautofunz2}
Let $\lambda_0<0$ be the largest eigenvalue of $A_p$ and $\psi$ be the
corresponding eigenfunction. If $N>2,\,\alpha \geq 2$ and $\beta>\alpha-2$ then
$$
C_1|x|^{-\frac{N-1}{2}-\frac{\beta-\alpha}{4}}
    e^{-\frac{2}{\beta-\alpha+2}|x|^{\frac{\beta-\alpha+2}{2}}}
\le \psi(x)\leq 
C_2 |x|^{-\frac{N-1}{2}-\frac{\beta-\alpha}{4}}
    e^{-\frac{2}{\beta-\alpha+2}|x|^{\frac{\beta-\alpha+2}{2}}}
$$
for any $x\in\R^N\setminus B_1$ and some positive constants $C_1,\,C_2$.
\end{proposition}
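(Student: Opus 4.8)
The plan is to compare $\psi$ with the explicit function $w(x):=|x|^{-\frac{N-1}{2}-\frac{\beta-\alpha}{4}}e^{-\frac{2}{\beta-\alpha+2}|x|^{\frac{\beta-\alpha+2}{2}}}$ by a suitable sub-/supersolution argument. Since $\psi$ is radial (Proposition \ref{comp}(iii)), I would write $\psi(x)=u(r)$ with $r=|x|$ and translate the eigenvalue equation $A\psi=\lambda_0\psi$ into the one-dimensional ODE
\[
(1+r^\alpha)\left(u''+\frac{N-1}{r}u'\right)-r^\beta u=\lambda_0 u,\qquad r\ge 1.
\]
For $r\ge 1$ the coefficient $1+r^\alpha$ is comparable to $r^\alpha$, so the dominant balance is $r^\alpha u''\sim r^\beta u$, i.e. $u''\sim r^{\beta-\alpha}u$; the WKB/Liouville–Green heuristic then predicts $u(r)\approx r^{\gamma}\exp\bigl(-\int r^{\frac{\beta-\alpha}{2}}\,dr\bigr)=r^\gamma e^{-\frac{2}{\beta-\alpha+2}r^{\frac{\beta-\alpha+2}{2}}}$, and matching the first-order (transport) term fixes $\gamma=-\frac{N-1}{2}-\frac{\beta-\alpha}{4}$. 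This is exactly $w$, so the guess is well motivated; the task is to make it rigorous as a two-sided bound.

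The key steps I would carry out are: (1) compute $Aw$ (equivalently, apply the radial operator to $w(r)$) and show that $Aw=\lambda_0 w+E(r)w$ where the relative error $E(r)$ decays, more precisely $E(r)=O(r^{-\delta})$ for some $\delta>0$ (the leading exponential and power terms cancel by the choice of exponent and of $\gamma$; what remains comes from lower-order terms in $r^\alpha$ vs.\ $1+r^\alpha$, from $\lambda_0 u$ on the right, and from the second derivative of the power prefactor). (2) Fix $\eps>0$ and consider $w_\pm:=w^{1\mp \text{(small correction)}}$ or, more simply, the functions $(1\pm\phi(r))w(r)$ with $\phi$ a slowly growing/decaying corrector chosen so that $(A-\lambda_0)\bigl((1+\phi)w\bigr)\le 0$ and $(A-\lambda_0)\bigl((1-\phi)w\bigr)\ge 0$ on $\{r>R\}$ for $R$ large; then use the maximum principle for $A-\lambda_0$ on the exterior domain $\R^N\setminus B_R$ (valid because $\lambda_0<0$, so $A-\lambda_0$ satisfies the positivity/maximum principle) to sandwich $\psi$ between constant multiples of $w_-$ and $w_+$ there. (3) On the compact annulus $1\le|x|\le R$, $\psi$ is continuous and strictly positive and $w$ is continuous and strictly positive, so the bounds extend to all of $\R^N\setminus B_1$ after adjusting $C_1,C_2$. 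I would also need boundary control on $\partial B_R$: there $\psi$ is a fixed positive number and $w$ is too, so one just picks the constants in the comparison functions to dominate/be dominated on that sphere, and the decay at infinity of both $\psi$ (Proposition \ref{comp}(iii)) and $w$ rules out a maximum of the difference escaping to infinity.

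The main obstacle is Step (2): turning the formal WKB match into genuine super/subsolutions with a clean application of the maximum principle. The delicate point is that the naive ansatz $w$ is only an approximate solution — $(A-\lambda_0)w$ does not have a sign — so one must absorb the error with a corrector without destroying the exponential rate, and one must verify that the corrector's contribution to $u''$ and $u'$ is genuinely lower order than the $r^{\beta-\alpha}u$ term; the condition $\beta>\alpha-2$ is exactly what guarantees $\frac{\beta-\alpha+2}{2}>0$ so that the exponent grows and the argument is not vacuous, while $\alpha\ge 2$ and $N>2$ enter in controlling the relative sizes of the transport and error terms. A secondary technical point is justifying that the radial ODE reduction is legitimate and that the exterior maximum principle applies to this non-uniformly elliptic operator with unbounded zeroth-order coefficient $-|x|^\beta$; here I would invoke that $A-\lambda_0$ has a positive supersolution ($\psi$ itself, or $w_+$) on the exterior domain, which is the standard hypothesis making the comparison principle work.
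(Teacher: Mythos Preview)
Your overall strategy---WKB ansatz plus exterior maximum-principle comparison, then patching on the compact annulus---is exactly the paper's, and your Step~(3) and the boundary/decay remarks are correct. The gap is in Step~(1), and it propagates to Step~(2).

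For your $w(r)=r^{\gamma}e^{-\sigma(r)}$ with $\gamma=-\tfrac{N-1}{2}-\tfrac{\beta-\alpha}{4}$ and $\sigma'(r)=r^{(\beta-\alpha)/2}$, one computes $\Delta w/w = r^{\beta-\alpha}+c\,r^{-2}$ with $c=\gamma(\gamma+N-2)$ (the transport term cancels, as you say). Hence
\[
\frac{Aw}{w}=(1+r^\alpha)\Big(r^{\beta-\alpha}+\frac{c}{r^2}\Big)-r^\beta
= r^{\beta-\alpha}+c\,r^{\alpha-2}+\frac{c}{r^2},
\]
so $E(r)=Aw/w-\lambda_0$ does \emph{not} satisfy $E(r)=O(r^{-\delta})$: it grows like $r^{\max(\beta-\alpha,\alpha-2)}$ (and is bounded away from~$0$ even when $\alpha=2$). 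The $r^{\beta-\alpha}$ term appears because you replaced $h(r)=r^\beta/(1+r^\alpha)$ by its leading part in the exponent; the $c\,r^{\alpha-2}$ term comes from multiplying the residual $c/r^2$ by the diffusion $1+r^\alpha$. With an error of this size, a vague multiplicative corrector $(1\pm\phi)w$ is not obviously adequate, and in any case one corrector cannot give both inequalities.

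The paper repairs this in two moves. First, it uses the exact $h$ in the ansatz,
$f=r^{-(N-1)/2}h^{-1/4}\exp\bigl(-\int h^{1/2}\bigr)\exp\bigl(-\int v_\lambda\bigr)$,
which eliminates the $r^{\beta-\alpha}$ error. Second, it takes $v_\lambda(r)=\sum_{i=1}^k c_i\,r^{-i\xi-1}$ (with $\xi=\tfrac{\beta-\alpha+2}{2}>0$) and determines the $c_i$ recursively so that the remaining error is $g(r)=\lambda r^{-2}+O(r^{-k\xi-2})+O(r^{-\alpha-2})$; after multiplication by $1+r^\alpha$ this becomes $\lambda(1+r^\alpha)r^{-2}+o(1)$. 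Since $\alpha\ge 2$, the free parameter $\lambda$ now controls the \emph{sign} of the dominant remainder: choosing $\lambda=2\lambda_0$ gives $(A-\lambda_0)f_{\alpha,\beta,2\lambda_0}<0$ (supersolution $\Rightarrow$ upper bound), while $\lambda=0$ gives $(A-\lambda_0)f_{\alpha,\beta,0}>0$ (subsolution $\Rightarrow$ lower bound). The maximum principle is applied to $\Delta-\frac{|x|^\beta+\lambda_0}{1+|x|^\alpha}$ on $\R^N\setminus B_R$; note that $\lambda_0<0$ is what \emph{forces} $R$ large (one needs $|x|^\beta+\lambda_0>0$), rather than being the reason the principle holds.
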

\begin{proof}
Since the eigenfunction is radial, we have to study the asymptotic behavior of the solution of an ordinary differential equation.
We follow the idea of the WKB method (see \cite{olver}), but since the error function is not bounded we need to compute it directly.

Let $f_{\alpha,\beta,\lambda}$ be the function
\begin{equation}\label{f}
f_{\alpha,\beta,\lambda}(x)=|x|^{-\frac{N-1}{2}}h^{-\frac{1}{4}}(|x|)
\exp \left\{-\int_R^{|x|}h^{\frac{1}{2}}(s)ds -\int_R^{|x|}v_\lambda(s)ds \right\}\;,
\end{equation}
where
$\lambda \in \R ,\,h(r)=\frac{r^\beta}{1+r^\alpha}$,
and $v_\lambda$ is a smooth function to be chosen later on.
If we set
\begin{equation}\label{w}
w(r)=r^{\frac{N-1}{2}}f_{\alpha,\beta,\lambda}(r),
\end{equation}
then
\begin{equation}\label{w''1}
 w'=w\left( -\frac{h'}{4h}-h^{\frac{1}{2}}-v_\lambda\right)
\qquad
\hbox{and}\qquad
w''= w(g+m+h),
\end{equation}
where
\begin{equation}\label{R}
g=\frac{5}{16}\left(\frac{h'}{h}\right)^2-
\frac{h''}{4h}+v_\lambda^2+v_\lambda\left(\frac{h'}{2h}+
2h^{\frac{1}{2}}\right)-v_\lambda '-m
\end{equation}
and
$$
m(r):=\frac{(N-1)(N-3)}{4r^2}\,.
$$
On the other hand, taking in mind (\ref{w}) we also obtain
\begin{equation}\label{w''2}
w''(r)=r^{\frac{N-1}{2}}\left( f_{\alpha,\beta,\lambda}'' +
\frac{N-1}{r}f_{\alpha,\beta,\lambda}'+\frac{(N-1)(N-3)}{4r^2}f_{\alpha,\beta,\lambda} \right).
\end{equation}
Comparing (\ref{w''1}) and (\ref{w''2}) we get
\[
f_{\alpha,\beta,\lambda}'' +\frac{N-1}{r}f_{\alpha,\beta,\lambda}'=
\frac{r^\beta}{1+r^\alpha}f_{\alpha,\beta,\lambda}+gf_{\alpha,\beta,\lambda}.
\]
That is
\begin{equation}\label {Delta}
\Delta f_{\alpha,\beta,\lambda}(x)-\frac{|x|^\beta}{1+|x|^\alpha} f_{\alpha,\beta,\lambda}(x)
 =g(|x|)f_{\alpha,\beta,\lambda}(x).
\end{equation}
To evaluate the function $g$ we set $\xi=\frac{\beta-\alpha}{2}+1$,
which is positive by the condition $\beta>\alpha-2$.
We have
\[
\frac{h'}{h}=\frac{1}{r}(\beta-\alpha)+\frac{1}{r}O(r^{-\alpha}),
\,\,\frac{h''}{h}=\frac{1}{r^2}(\beta-\alpha)(\beta-\alpha-1)+\frac{1}{r^2}O(r^{-\alpha}).
\]
Then (\ref{R}) is reduced to
\begin{eqnarray}\label{eq:errore-R2}
g(r) &=& -v_\lambda '+\frac{v_\lambda}{r}
    \left( \xi-1+O(r^{-\alpha})+2r^\xi \sqrt{\frac{r^\alpha}{1+r^\alpha}}\right)+v_\lambda^2 \nonumber \\
    & & +\frac{c_0}{r^2}+\frac{1}{r^2}\left(O(r^{-\alpha})+O(r^{-2\alpha})\right)\nonumber \\
&=& -v_\lambda '
+\frac{v_\lambda}{r}\left(
	\xi-1+O(r^{-\alpha})+2r^\xi -2r^{\xi} \frac{  (1+r^\alpha)^{1/2}-r^{ \alpha/2 }   }{  (1+r^\alpha)^{1/2} }
	\right)\nonumber \\
&&+v_\lambda^2 +\frac{c_0}{r^2}+\frac{1}{r^2}O(r^{-\alpha})\nonumber \\
&=& -v_\lambda '
+\frac{v_\lambda}{r}\left( \xi-1+2r^\xi +(1+r^\xi)O(r^{-\alpha})\right)
+v_\lambda^2\nonumber \\
& & +\frac{c_0}{r^2}+\frac{1}{r^2}O(r^{-\alpha}),
\end{eqnarray}
where $c_0=c_0(\xi)=\left( \frac{\xi-1}{2} \right)^2+\frac{\xi-1}{2}
-\frac{(N-1)(N-3)}{4}$. 
So, if we take in (\ref{eq:errore-R2})
\[
 v_\lambda(r)=\sum_ {i=1}^k c_i\frac{1}{r^{i\xi+1}},\quad r\ge 1,
\]
we obtain
\begin{eqnarray*}
r^2g(r)&=& \sum_{i=1}^k c_i(i\xi+1)\frac{1}{r^{i\xi}}+
(\xi-1)\sum_{i=1}^kc_i\frac{1}{r^{i\xi}}+2  \sum_{i=0}^{k-1} c_{i+1}\frac{1}{r^{i\xi}}\\
 & & + \left(\sum_{i=1}^kc_i\frac{1}{r^{i\xi}}+ \sum_{i=0}^{k-1} c_{i+1}\frac{1}{r^{i\xi}}\right)O(r^{-\alpha})\\
 & & +\sum_{i,j=1}^kc_ic_j\frac{1}{r^{(i+j)\xi}}
  +c_0+O(r^{-\alpha})\\
&=&\sum _{i=2}^{k-1}\left[c_i\xi(i+1)+2c_{i+1}+\sum_{j+s=i}c_jc_s \right]\frac{1}{r^{i\xi}}+(2c_1\xi+2c_2)r^{-\xi}\\
& &  +c_k\xi (k+1)\frac{1}{r^{k\xi}}+2c_1+\sum _{i+j\geq k}\frac{c_ic_j}{r^{(i+j)\xi}}
  +c_0+O(r^{-\alpha}).
\end{eqnarray*}
We can choose $c_1,\dots,c_{k}$ such that $$2c_1+c_0=\lambda ,\,2c_1\xi +2c_2=0
\hbox{\ and }\left[\xi(i+1)c_i+2c_{i+1}+\sum_{j+s=i}c_jc_s \right]=0$$ for $i=2,\cdots, k-1$
and obtain
\begin{align}
r^2g(r)=
\lambda+c_k\xi (k+1)\frac{1}{r^{k\xi}}+\sum_{i+j\ge k}\frac{c_ic_j}{r^{(i+j)\xi}}+ O(r^{-\alpha}).
\end{align}
Thus,
\[
g(r)=O\left(\frac{1}{r^{k\xi+2} }\right)+O\left(\frac{1}{r^{\alpha +2} }\right)+\frac{\lambda}{r^2}.
\]
Since $\xi>0$ there exists $k\in \N$ such that $k\xi+2-\alpha>0$. So we have

\begin{equation}\label{eq:stima-f-alpha-beta-lambda0}
(1+|x|^{\alpha}) \Delta f_{\alpha,\beta,\lambda}(x)-|x|^{\beta} f_{\alpha,\beta,\lambda}(x)
  =o(1)f_{\alpha,\beta,\lambda}(x)+\lambda \frac{1+|x|^{\alpha}}{|x|^2}f_{\alpha,\beta,\lambda}(x).
\end{equation}
We prove first the upper bound.

For $\psi$ we know that
\begin{equation}\label{eq:eq-aut-diviso}
 \Delta \psi-\frac{|x|^\beta}{1+|x|^\alpha}\psi-\frac{\lambda_0}{1+|x|^\alpha}\psi =0\;.
\end{equation}
Since $\alpha-2\geq 0$ and $\lambda_0<0$, for $|x|$ large enough we have $o(1)+2\lambda_0 \frac{1+|x|^{\alpha}}{|x|^2}<\lambda_0$.
Then, by \eqref{eq:stima-f-alpha-beta-lambda0}, it follows that
\[
(1+|x|^{\alpha}) \Delta f_{\alpha,\beta,2\lambda_0}(x)-|x|^{\beta} f_{\alpha,\beta,2\lambda_0}(x)
  < \lambda_0 f_{\alpha,\beta,2\lambda_0}.
\]
Thus,
\begin{equation}\label{eq:f-2lambda-diviso}
\Delta f_{\alpha,\beta,2\lambda_0}(x)-\frac{|x|^{\beta}}{1+|x|^\alpha} f_{\alpha,\beta,2\lambda_0}(x)
   -\frac{\lambda_0}{1+|x|^\alpha} f_{\alpha,\beta,2\lambda_0}(x)< 0\;,
\end{equation}
in $\R^N\setminus B_R$ for some $R>0$.
Comparing \eqref{eq:eq-aut-diviso} and \eqref{eq:f-2lambda-diviso}, in $\R^N\setminus B_R$
we have
\begin{eqnarray*}
\Delta (f_{\alpha,\beta,2\lambda_0}-C\psi)
& < &
\frac{\lambda_0+|x|^\beta}{1+|x|^{\alpha}}(f_{\alpha,\beta,2\lambda_0}-C\psi)\quad \hbox{\ for any constant }C>0.
\end{eqnarray*}
Since $\beta>0$, we have
$$
W(x):=\frac{\lambda_0+|x|^\beta}{1+|x|^{\alpha}}>0
$$
for $|x|$ large enough.
Since both $f_{\alpha,\beta,2\lambda_0}$ and $\psi$ tend to $0$ as $|x|\to \infty$
and since
there exists $C_2$ such that $\psi\le C_2 f_{\alpha,\beta,2\lambda_0}$ on $\partial B_R$, we can apply
the maximum principle to the problem
$$\left\{\begin{array}{ll}
\Delta g(x)-W(x)g(x)<0 \,\,\quad \hbox{\ in }\R^N\setminus B_R ,\\
g(x)\ge 0 \hspace*{3cm}\hbox{\ in }\partial B_R ,\\
\lim_{|x|\to \infty}g(x)=0,
\end{array} \right.
$$ where $g:=f_{\alpha,\beta,2\lambda_0}-C_2^{-1}\psi$,
to obtain
$\psi\leq C_2 f_{\alpha,\beta,2\lambda_0}$ in $\R^N\setminus B_R$. Here one has to note that since $\lim_{|x|\to \infty}g(x)=0$, one can see that the classical maximum principle on bounded domains can be applied, cf. \cite[Theorem 3.5]{gil-tru}.
Then,
\begin{align*}
&\psi(x)\leq C_2 |x|^{-\frac{N-1}{2}-\frac{1}{4}(\beta-\alpha) }
\exp\left\{-\int_R^{|x|}\sqrt{\frac{r^\beta}{1+r^\alpha}}\,dr\right\}
\exp\left\{-\int_R^{|x|}v_{2\lambda_0}(r)dr\right\}\\
&\quad\leq C_3
|x|^{-\frac{N-1}{2}-\frac{1}{4}(\beta-\alpha) }
\exp\left\{-\int_R^{|x|}\sqrt{\frac{r^\beta}{1+r^\alpha}}\,dr\right\},
\end{align*}
since
\begin{eqnarray}\label{extra-justi}
\lim_{|x|\to \infty}\int_R^{|x|} v_{\lambda}(r)\,dr &=&
\lim_{|x|\to \infty}\sum_{j=1}^k\frac{c_j}{j\xi}(R^{-j\xi}-|x|^{-j\xi})\nonumber \\
&=& \sum_{j=1}^k\frac{c_j}{j\xi}R^{-j\xi}.
\end{eqnarray}

As regards lower bounds of $\psi $, we observe that, from
\eqref{eq:stima-f-alpha-beta-lambda0}, we have
\[
\Delta f_{\alpha,\beta,0}(x)-\frac{|x|^{\beta}}{1+|x|^\alpha}f_{\alpha,\beta,0}(x)=
\frac{o(1)}{1+|x|^\alpha}f_{\alpha,\beta,0}(x)>
\frac{\lambda_0}{1+|x|^\alpha}f_{\alpha,\beta,0}(x)
\]
if $|x|\ge R$ for some suitable $R>0$.
Then,
\[
\Delta f_{\alpha,\beta,0}(x)>
\frac{|x|^{\beta}}{1+|x|^\alpha}f_{\alpha,\beta,0}(x)+
\frac{\lambda_0}{1+|x|^\alpha}f_{\alpha,\beta,0}(x)
\]
Since $\frac{\lambda_0}{1+|x|^\alpha}\psi=
\Delta \psi (x)-\frac{|x|^\beta}{1+|x|^\alpha}\psi$ we have
\begin{align*}
&\Delta (f_{\alpha,\beta,0}-\psi)
> \frac{|x|^\beta+\lambda_0}{1+|x|^\alpha}(f_{\alpha,\beta,0}-\psi)\;.
\end{align*}
We can assume that $|x|^\beta+\lambda_0$ is positive for $|x|\ge R$ and, arguing as above,
by the maximum principle and using \eqref{extra-justi} we get
\[
\psi(x) \ge C_1f_{\alpha,\beta,0}(x)\geq C_1 |x|^{-\frac{N-1}{2}-\frac{1}{4}(\beta-\alpha) }
\exp\left\{-\int_R^{|x|} \sqrt{\frac{r^\beta}{1+r^\alpha}}\, dr\right\}
\]
for $|x|\ge R$. Since $0<\psi \in C(\R^N)$, by changing the constants, the above upper and lower estimates remain valid for $1\le |x|\le R$. This ends the proof of the proposition.
\end{proof}

\section{Intrinsic ultracontractivity and heat kernel estimates}\label{sc:estimates-eigenfunction}
Let us now introduce on $L^2_\mu:=L^2(\R^N,d\mu)$ the bilinear form
\begin{equation}\label{formaquad}
a_\mu(u,v)=\int_{\R^N}\nabla u\cdot \nabla \overline{v}\,dx+\int_{\R^N}Vu\overline{v}\,d\mu,\qquad\;\, u,v\in D(a_\mu),
\end{equation}
where $V(x)=|x|^\beta ,\,d\mu(x)=(1+|x|^\alpha)^{-1}dx$ and $D(a_\mu)=\overline{C_c^\infty(\R^N)}^{\|\cdot \|_H}$ with $H$ the Hilbert space $$H=
\{u\in L^2_\mu: V^{1/2}u\in L^2_{\mu},\, \nabla u\in (L^2(\R^N))^N\}$$ endowed with the inner product
$$
\langle u,v\rangle_{H}=\int_{\R^N}(1+V)u\overline{v}\,d\mu +\int_{\R^N}\nabla u\cdot \nabla \overline{v}\,dx.
$$
Since $a_{\mu}$ is a closed, symmetric and accretive form, to $a_\mu$ we associate the self-adjoint operator $A_\mu$ defined by
\begin{eqnarray*}
\begin{array}{l}
D(A_\mu)=\displaystyle\left\{u\in D(a_\mu) : \exists g\in L^2_\mu\,\, \mbox{ s.t. } a_\mu(u,v)=-\int_{\R^N}g\overline{v}\,d\mu,\,\forall v\in D(a_\mu)\right\},\\[3mm]
A_\mu u=g,
\end{array}
\end{eqnarray*}
see e.g., \cite[Prop. 1.24]{ouhabaz}.
By general results on positive self-adjoint operators induced by nonnegative quadratic forms
in Hilbert spaces (see e.g., \cite[Prop. 1.51, Thms. 1.52, 2.6, 2.13]{ouhabaz})
$A_\mu$ generates a positive analytic semigroup $(e^{tA_\mu})_{t\ge 0}$ in $L^2_\mu$.

We need to show that the semigroup $e^{t{A_\mu}}$ coincides with the semigroup $T_p(\cdot)$ generated
by $A_p$ in $L^p(\R^N)$ on $L^p(\R^N) \cap L^2_\mu$.

\begin{lemma} \label{coerenza1} We have
$$
D(A_\mu)= \left\{u \in D(a_\mu) \cap W^{2,2}_{loc}(\R^N):
(1+|x|^\alpha) \Delta u -V(x)u \in L^2_\mu \right\}
$$
and $A_\mu u=(1+|x|^\alpha)\Delta u-V(x)u $ for $u \in D(A_\mu)$. Moreover, if
$\lambda >0$ and $f \in L^p(\R^N) \cap L^2_\mu$, then
$$
(\lambda-A_\mu)^{-1}f=(\lambda-A_p)^{-1}f.
$$
\end{lemma}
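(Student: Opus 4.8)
The statement has two parts — the identification of $D(A_\mu)$ together with the formula for $A_\mu$, and the consistency with $A_p$ — and I would prove them in this order.

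\emph{Domain identification.} If $u\in D(A_\mu)$, put $g=A_\mu u\in L^2_\mu$; testing the defining identity $a_\mu(u,v)=-\int_{\R^N}g\overline v\,d\mu$ with $v\in C_c^\infty(\R^N)$ and recalling $d\mu=(1+|x|^\alpha)^{-1}dx$ gives $\int_{\R^N}\nabla u\cdot\nabla\overline v\,dx=-\int_{\R^N}(1+|x|^\alpha)^{-1}(Vu+g)\overline v\,dx$, that is $\Delta u=(1+|x|^\alpha)^{-1}(Vu+g)$ in $\mathcal D'(\R^N)$. The right-hand side lies in $L^2_{loc}(\R^N)$ (since $u,g\in L^2_{loc}$ and $V,(1+|x|^\alpha)^{-1}$ are locally bounded), so interior $L^2$-regularity for $\Delta$ yields $u\in W^{2,2}_{loc}(\R^N)$ and, multiplying by $1+|x|^\alpha$, $(1+|x|^\alpha)\Delta u-Vu=g\in L^2_\mu$. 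Conversely, if $u\in D(a_\mu)\cap W^{2,2}_{loc}(\R^N)$ and $g:=(1+|x|^\alpha)\Delta u-Vu\in L^2_\mu$, then for $v\in C_c^\infty(\R^N)$ an integration by parts (legitimate since $u\in W^{2,2}_{loc}$) gives $\int_{\R^N}\nabla u\cdot\nabla\overline v\,dx=-\int_{\R^N}(1+|x|^\alpha)\Delta u\,\overline v\,d\mu=-\int_{\R^N}(g+Vu)\overline v\,d\mu$, whence $a_\mu(u,v)=-\int_{\R^N}g\overline v\,d\mu$; since both sides are $\|\cdot\|_H$-continuous in $v$ (using $u\in H$ on the left and $g\in L^2_\mu$ on the right) and $C_c^\infty(\R^N)$ is dense in $D(a_\mu)$, this extends to all $v\in D(a_\mu)$, so $u\in D(A_\mu)$ and $A_\mu u=g$. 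This proves the first assertion.

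\emph{Consistency.} Because $(\lambda-A_p)^{-1}$ and $(\lambda-A_\mu)^{-1}$ are bounded (Proposition \ref{comp} gives $\sigma(A_p)\subset(-\infty,\lambda_0]$ with $\lambda_0<0$, and $A_\mu\le0$), and $C_c^\infty(\R^N)$ is dense in $L^p(\R^N)\cap L^2_\mu$ for $\|\cdot\|_p+\|\cdot\|_{L^2_\mu}$ (truncate and mollify), it suffices to prove $(\lambda-A_\mu)^{-1}f=(\lambda-A_p)^{-1}f$ for $f\in C_c^\infty(\R^N)$, the general case then following by passing to an a.e.-convergent subsequence. Fix such an $f$ and, for $n\in\N$, let $u_n$ be the solution in $\bigcap_{q<\infty}W^{2,q}(B_n)\cap W^{1,q}_0(B_n)$ of $\lambda u_n-(1+|x|^\alpha)\Delta u_n+Vu_n=f$ in $B_n$, $u_n=0$ on $\partial B_n$, extended by $0$ to $\R^N$. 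Dividing by $1+|x|^\alpha$ and integrating by parts against $\varphi\in C_c^\infty(B_n)$ shows $\lambda\langle u_n,\varphi\rangle_{L^2_\mu}+a_\mu(u_n,\varphi)=\langle f,\varphi\rangle_{L^2_\mu}$; taking $\varphi=u_n$ and using $a_\mu(u_n,u_n)\ge0$ gives $\|u_n\|_H\le C(\lambda)\|f\|_{L^2_\mu}$. Since $V(x)=|x|^\beta\to\infty$ as $|x|\to\infty$, the embedding $H\hookrightarrow L^2_\mu$ is compact (tightness of the $L^2_\mu$-mass from the bound on $\|V^{1/2}\cdot\|_{L^2_\mu}$, together with Rellich's theorem on balls), so a subsequence $u_{n_k}$ converges in $L^2_\mu$ and weakly in $H$ to some $u\in D(a_\mu)$; passing to the limit in the weak formulation (each fixed $\varphi\in C_c^\infty(\R^N)$ is eventually admissible) and then using density shows $u=(\lambda-A_\mu)^{-1}f$. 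On the other hand, $|u_n|\le(\lambda-A_p)^{-1}|f|$ pointwise by positivity and comparison of the Dirichlet realizations on $B_n$ with $A_p$, so $(u_n)$ is bounded in $L^p(\R^N)$, and interior elliptic estimates on balls of fixed radius (where the operator is uniformly elliptic with smooth coefficients) bound $(u_n)$ in $W^{2,p}_{loc}(\R^N)$; extracting a further subsequence, $u_{n_{k_j}}\to\tilde u$ in $L^p_{loc}(\R^N)$, where $\tilde u\in L^p(\R^N)$ solves the same equation and, by the characterization of $A_p$ and its domain $D_p(A)$ in \cite{lor-rhan}, \cite{can-rhan-tac1}, belongs to $D_p(A)$, hence equals $(\lambda-A_p)^{-1}f$. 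Comparing the a.e. limits along this common subsequence gives $(\lambda-A_\mu)^{-1}f=(\lambda-A_p)^{-1}f$.

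The main obstacle is the last step on the $L^p$ side: one needs that an $L^p$-solution of $\lambda u-(1+|x|^\alpha)\Delta u+Vu=f$ with $f\in L^p(\R^N)$ automatically belongs to $D_p(A)$, which is the maximal-domain/uniqueness information about $A_p$ supplied by \cite{lor-rhan}, \cite{can-rhan-tac1}; everything else is soft. A variant avoiding the exhaustion by balls: for $f\in C_c^\infty(\R^N)$, consistency of the semigroups $T_p(\cdot)$ gives $u:=(\lambda-A_p)^{-1}f=(\lambda-A_2)^{-1}f\in D_2(A)$; since $\|\cdot\|_H$ is dominated by the graph norm of $A_2$ on $D_2(A)$ and $C_c^\infty(\R^N)$ is a core for $A_2$, one gets $u\in D(a_\mu)$, and as $u\in W^{2,2}_{loc}(\R^N)$ with $(1+|x|^\alpha)\Delta u-Vu=\lambda u-f\in L^2_\mu$, the first part of the lemma yields $u\in D(A_\mu)$ and $(\lambda-A_\mu)u=f$, i.e. $u=(\lambda-A_\mu)^{-1}f$.
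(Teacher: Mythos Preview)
Your domain identification is the paper's argument essentially verbatim. For the consistency statement, the paper's proof is precisely your ``variant'' at the end: for $f\in C_c^\infty(\R^N)$ set $u=(\lambda-A_p)^{-1}f$, use the $p$-consistency of the $T_p(\cdot)$'s to place $u\in D_2(A)$, verify $u\in H$ directly from the description of $D_2(A)$ (the paper spells out $\|V^{1/2}u\|_{L^2_\mu}^2\le\|u\|_2^2+\|Vu\|_2^2$), use that $C_c^\infty(\R^N)$ is a core for $A_2$ to upgrade $u\in H$ to $u\in D(a_\mu)$, and conclude via the first part of the lemma. So your second route \emph{is} the paper's route.

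Your primary route via Dirichlet exhaustion on balls is a genuine alternative. It avoids invoking the $p$-consistency of the $T_p$'s up front, but, as you yourself flag, identifying the $L^p$-limit $\tilde u$ with $(\lambda-A_p)^{-1}f$ still needs that the maximal $L^p$-realization of $A$ coincides with $A_p$, which is again structural input from \cite{lor-rhan,can-rhan-tac1}; so both routes end up leaning on the same external results. The exhaustion argument also carries more overhead: the pointwise domination $|u_n|\le(\lambda-A_p)^{-1}|f|$ presupposes that the Dirichlet approximants increase to the minimal semigroup and that the latter is $T_p(\cdot)$ --- true by the construction in the cited papers, but not purely soft. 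In short, the variant is both shorter and closer to the paper; the exhaustion approach works but buys you nothing extra here.
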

\begin{proof}
The inclusion $"\subset "$ is obtained, taking $v\in C_c^{\infty}(\R^N)$ in \eqref{formaquad}, by local elliptic regularity.
As regards the inclusion $"\supset"$ we consider $u\in D(a_\mu) \cap W^{2,2}_{loc}(\R^N)$ such that $g:=(1+|x|^\alpha)\Delta u-V(x)u\in L^2_\mu$ and consider
$v\in C_c^{\infty}(\R^N)$. Integrating by parts we obtain
\begin{equation}\label{eq:int-part}
a_\mu(u,v)=-\int gvd\mu.
\end{equation}
By the density of $C_c^{\infty}(\R^N)$ in $D(a_\mu)$ we have equation \eqref{eq:int-part} for every $v\in D(a_\mu)$. This implies that $u \in D(A_\mu).$

To show the coherence of the resolvent, we consider $f\in C_c^{\infty}(\R^N)$
and let $u=(\lambda-A)^{-1}f$. Since $f\in L^2(\R^N)\cap C_0(\R^N)$, by
 \cite[Theorem 3.7]{can-rhan-tac1} and \cite[Theorem 4.4]{can-rhan-tac1}, it follows that $u\in D_{2}(A)$. So, we have $\nabla u\in L^2(\R^N)$
and $Vu\in L^2(\R^N)$. Moreover, it is clear that $u\in L^2_\mu$, and
\begin{eqnarray}\label{eq:pot-1-2}
\|V^{1/2}u\|_{L^2_\mu}^2 &\leq &  \int_{\R^N}V(x)u^2dx \nonumber \\
&\leq &
\int_{B(1)}u^2 dx+\int_{\R^N/B(1)}V^2(x)u^2dx \nonumber \\
&\le & \|u\|_2^2+\|Vu\|_2^2.
\end{eqnarray}
This yields $u\in H$.
Since $C_c^{\infty}(\R^N)$ is dense in $D_{2}(A)$, see \cite[Lemma 4.3]{can-rhan-tac1}, we can find a sequence $(u_n)\subset C_c^\infty(\R^N)$
such that $u_n$ converges to $u$ in the operator norm.
Then, $u_n$ converges to $u$ in $L^2(\R^N)$ and hence in $L_\mu^2$.
By \cite[Lemma 4.2]{can-rhan-tac1} $\nabla u_n$ converges to $\nabla u_n$ in $L^2(\R^N)$ and hence in $L_\mu^2$.
Finally replacing $u$ with $u_n-u$ in \eqref{eq:pot-1-2} we have that $V^{1/2}u_n$ converges to $V^{1/2}u$ in $L^2_\mu$.
Thus we have proved that $u\in D(a_\mu)$.
Integration by parts we obtain
\[
 a(u,v)=-(\lambda u-f,v)_{L^2_\mu}.
\]
That is $u\in D(A_\mu)$ and $\lambda u-A_\mu u=f$. Therefore, $(\lambda -A_\mu)^{-1}f=(\lambda -A_p)^{-1}f$ for all $f\in C_c^\infty(\R^N)$ and so by density the last statement follows.
\end{proof}

The previous Lemma implies in particular that
\[
e^{tA_\mu}f=T_p(t)f=\int_{\R^N}k(t,x,y)f(y)\,dy,\qquad f\in L^p(\R^N)\cap L_\mu^2.
\]
By density we obtain that the semigroup $e^{tA_\mu}$ admits
the integral representation  $e^{tA_\mu}f(x)=\int_{\R^N}k_\mu(t,x,y)f(y)d\mu(y)$ for all $f\in L^2_\mu$, where
\begin{equation}\label{eq:relaz-semigruppi}
k_\mu(t,x,y)=(1+|y|^\alpha)k(t,x,y),\qquad t>0,\, x,y\in \R^N.
\end{equation}

Let us now give the first application of Proposition \ref{primautofunz2}. The proof is similar to the one given in \cite[Proposition 3.4]{lor-rhan} and is based on the semigroup law and the symmetry of $k_\mu(t,\cdot ,\cdot)$ for $t>0$.
\begin{proposition}\label{on-diagonal}
If $N>2,\,\alpha \ge 2$ and $\beta >\alpha -2$, then
\begin{eqnarray*}
k(t,x,x)\ge Me^{\lambda_0 t}\left(|x|^{\frac{\alpha-\beta}{4}-\frac{N-1}{2}}
e^{-\frac{2}{\beta-\alpha+2}|x|^{\frac{\beta-\alpha+2}{2}}}\right)^2(1+|x|^\alpha)^{-1},\quad t>0,
\end{eqnarray*}
for all $x\in\R^N\setminus B_1$ and some constant $M>0$.
\end{proposition}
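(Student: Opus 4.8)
The plan is to exploit the eigenfunction equation together with the reproducing/semigroup property of the symmetric kernel $k_\mu$. Recall from Lemma \ref{coerenza1} and \eqref{eq:relaz-semigruppi} that $e^{tA_\mu}$ has the symmetric kernel $k_\mu(t,x,y)=(1+|y|^\alpha)k(t,x,y)$ on $L^2_\mu$, and that $\psi$, being the eigenfunction of $A_p$ for the eigenvalue $\lambda_0$, satisfies $e^{tA_\mu}\psi=e^{\lambda_0 t}\psi$ (it lies in $L^p\cap L^2_\mu$ by Proposition \ref{comp}(iii), since $\psi$ is bounded and decays, and $d\mu$ is finite for $\alpha>N$ — more generally one checks $\psi\in L^2_\mu$ directly from the sharp upper bound in Proposition \ref{primautofunz2}). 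First I would write, for any $x$,
\[
e^{\lambda_0 t}\psi(x)=\int_{\R^N}k_\mu(t,x,y)\psi(y)\,d\mu(y).
\]
Splitting $e^{tA_\mu}=e^{(t/2)A_\mu}e^{(t/2)A_\mu}$ and using symmetry of $k_\mu(t/2,\cdot,\cdot)$ gives, by Chapman–Kolmogorov,
\[
k_\mu(t,x,x)=\int_{\R^N}k_\mu(t/2,x,y)^2\,d\mu(y).
\]

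Next, by Cauchy–Schwarz in $L^2_\mu$,
\[
e^{\lambda_0 t/2}\psi(x)=\int_{\R^N}k_\mu(t/2,x,y)\psi(y)\,d\mu(y)
\le\left(\int_{\R^N}k_\mu(t/2,x,y)^2\,d\mu(y)\right)^{1/2}\|\psi\|_{L^2_\mu},
\]
so that
\[
k_\mu(t,x,x)=\int_{\R^N}k_\mu(t/2,x,y)^2\,d\mu(y)\ge \frac{e^{\lambda_0 t}\psi(x)^2}{\|\psi\|_{L^2_\mu}^2}.
\]
Setting $M_0=\|\psi\|_{L^2_\mu}^{-2}$ (finite and positive by the previous paragraph) yields $k_\mu(t,x,x)\ge M_0 e^{\lambda_0 t}\psi(x)^2$. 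Now I translate back to $k$: from \eqref{eq:relaz-semigruppi}, $k_\mu(t,x,x)=(1+|x|^\alpha)k(t,x,x)$, hence
\[
k(t,x,x)\ge M_0\,e^{\lambda_0 t}\,\frac{\psi(x)^2}{1+|x|^\alpha}.
\]
Finally, inserting the lower bound $\psi(x)\ge C_1|x|^{-\frac{N-1}{2}-\frac{\beta-\alpha}{4}}e^{-\frac{2}{\beta-\alpha+2}|x|^{\frac{\beta-\alpha+2}{2}}}$ from Proposition \ref{primautofunz2}, valid for $x\in\R^N\setminus B_1$, and absorbing $C_1^2$ into the constant, I get the claimed inequality with $M=M_0 C_1^2$. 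Here one uses that $\int_R^{|x|}\sqrt{r^\beta/(1+r^\alpha)}\,dr$ behaves like $\frac{2}{\beta-\alpha+2}|x|^{\frac{\beta-\alpha+2}{2}}$ up to a bounded additive error, so the exponential in $f_{\alpha,\beta,0}$ matches the one in the statement after adjusting constants, exactly as already done at the end of the proof of Proposition \ref{primautofunz2}.

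The only genuine point requiring care — the ``main obstacle'' — is the justification that $\psi\in L^2_\mu$ and that the spectral identity $e^{tA_\mu}\psi=e^{\lambda_0 t}\psi$ is legitimate in $L^2_\mu$ rather than merely in $L^p$; this is where consistency of the semigroups from Lemma \ref{coerenza1} is essential, together with the membership $\psi\in D(a_\mu)$, which follows since $\psi$ is bounded, $\nabla\psi$ is bounded (Proposition \ref{comp}(iii) gives $\psi\in C_b^{1+\nu}$), and $V^{1/2}\psi\in L^2_\mu$ by the sharp decay estimate. Everything else is the standard Davies-type on-diagonal lower bound argument. Because $d\mu$ is a finite measure here (as $\alpha\ge 2$ and one may even not need $\alpha>N$ thanks to the decay of $\psi$), no further integrability subtlety arises.
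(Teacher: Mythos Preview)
Your argument is correct and is precisely the approach the paper has in mind: the semigroup law together with the symmetry of $k_\mu$ gives $k_\mu(t,x,x)=\int k_\mu(t/2,x,y)^2\,d\mu$, Cauchy--Schwarz applied to the eigenfunction identity $e^{\lambda_0 t/2}\psi(x)=\int k_\mu(t/2,x,y)\psi(y)\,d\mu$ then yields $k_\mu(t,x,x)\ge \|\psi\|_{L^2_\mu}^{-2}e^{\lambda_0 t}\psi(x)^2$, and one concludes via \eqref{eq:relaz-semigruppi} and Proposition~\ref{primautofunz2}. One small correction to your closing remarks: $d\mu$ is \emph{not} finite under the standing hypotheses (e.g.\ $N=3$, $\alpha=2$), so the membership $\psi\in L^2_\mu$ really does rely on the exponential decay of $\psi$ (or simply on $\psi\in L^2(\R^N)$ together with $d\mu\le dx$), not on finiteness of $\mu$.
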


We now give the main result of this section.
\begin{theorem}\label{th:stima-kernel-via-autofunz}
If $N>2$, $\alpha\geq 2$ and $\beta>\alpha-2$ then
$$
k(t,x,y)\leq c_1e^{\lambda_0t+c_2t^{-b}}|x|^{-\frac{N-1}{2}-\frac{\beta-\alpha}{4}} 
  e^{-\frac{2}{\beta-\alpha+2}|x|^{\frac{\beta-\alpha+2}{2}}}
\frac{|y|^{-\frac{N-1}{2}-\frac{\beta-\alpha}{4}}}{1+|y|^\alpha}
  e^{-\frac{2}{\beta-\alpha+2}|y|^{\frac{\beta-\alpha+2}{2}}}
$$
for $t>0,\,x,y\in \R^N\setminus B_1$, where $c_1,c_2$ are positive constants and $b=\frac{\beta-\alpha+2}{\beta+\alpha-2}$.
\end{theorem}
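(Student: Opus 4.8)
The plan is to establish the so-called intrinsic ultracontractivity estimate
$$k_\mu(t,x,y)\le c_1 e^{\lambda_0 t}e^{c_2 t^{-b}}\psi(x)\psi(y),\qquad t>0,\ x,y\in\R^N,$$
and then to translate it into the claimed bound on $k(t,x,y)$ by using the relation $k_\mu(t,x,y)=(1+|y|^\alpha)k(t,x,y)$ from \eqref{eq:relaz-semigruppi} together with the two-sided bounds on the ground state $\psi$ from Proposition \ref{primautofunz2}. Concretely, once the intrinsic ultracontractivity is in hand, I would write $k(t,x,y)=(1+|y|^\alpha)^{-1}k_\mu(t,x,y)\le c_1 e^{\lambda_0 t}e^{c_2 t^{-b}}(1+|y|^\alpha)^{-1}\psi(x)\psi(y)$ and replace both factors $\psi(x)$, $\psi(y)$ by their upper estimates $C_2|z|^{-\frac{N-1}{2}-\frac{\beta-\alpha}{4}}e^{-\frac{2}{\beta-\alpha+2}|z|^{\frac{\beta-\alpha+2}{2}}}$ valid for $|z|\ge 1$; absorbing $C_2^2$ into $c_1$ gives exactly the statement for $x,y\in\R^N\setminus B_1$.

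So the real work is the intrinsic ultracontractivity bound for $k_\mu$, which I would obtain by adapting the scheme of Davies (as in \cite{davies}) and of \cite{lor-rhan}. The symmetric Markov semigroup $e^{tA_\mu}$ on $L^2_\mu$ is ultracontractive (this is quoted in the introduction, coming from \cite{lor-rhan}, \cite{can-rhan-tac1}), so one has an on-diagonal bound $k_\mu(t,x,y)\le C t^{-N/2}$ for small $t$. The passage to the intrinsically ultracontractive form is done by the standard ground-state transform: set $h=\psi^{-1}$ and consider the semigroup $S(t)f=\psi^{-1}e^{-\lambda_0 t}e^{tA_\mu}(\psi f)$ on $L^2(\psi^2 d\mu)$, which is Markovian with invariant density; the target estimate is equivalent to $\|S(t)\|_{1\to\infty}\le c_1 e^{c_2 t^{-b}}$. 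One controls this via a weighted (super-Poincaré / Nash-type) inequality for the transformed Dirichlet form, where the key input is the behaviour of the potential: after the transform the effective potential is essentially $V=|x|^\beta$ measured against the diffusion $1+|x|^\alpha$, and one needs a logarithmic Sobolev or Nash inequality with the right dependence on the level sets $\{V\le r\}$. Tracking the growth $|x|^\beta$ versus $|x|^\alpha$ produces the exponent $b=\frac{\beta-\alpha+2}{\beta+\alpha-2}$ in the small-time factor $e^{c_2 t^{-b}}$; the lower bound on $\psi$ from Proposition \ref{primautofunz2}, equivalently the on-diagonal lower bound of Proposition \ref{on-diagonal}, is what guarantees that $\psi^{-1}$ does not blow up faster than a controlled rate, which is precisely what makes the weighted functional inequality close with this exponent.

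The main obstacle, and the step I would spend the most care on, is deriving the weighted functional inequality with the correct quantitative constant $b$: one must show that for the measure $\psi^2 d\mu$ the tail decay of $\psi$ (sharply $|x|^{-\frac{N-1}{2}-\frac{\beta-\alpha}{4}}e^{-\frac{2}{\beta-\alpha+2}|x|^{\frac{\beta-\alpha+2}{2}}}$) combined with the diffusion coefficient $1+|x|^\alpha$ yields a super-Poincaré inequality whose rate function $\beta(s)$ behaves like $s^{-\text{(something)}}$ giving exactly $t^{-b}$ after the standard Davies-type ODE/integration argument. Everything else — the ground-state transform, the equivalence of intrinsic ultracontractivity with the $L^1\to L^\infty$ bound for $S(t)$, and the final substitution of the $\psi$-estimates — is routine once that inequality is established. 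I would also need to briefly check that the estimate extends (by continuity and compactness of $\overline{B_1}$, absorbing constants) so there is no loss at $|x|,|y|\ge 1$, but that is immediate as in the end of the proof of Proposition \ref{primautofunz2}.
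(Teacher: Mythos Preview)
Your overall architecture is correct and matches the paper's: prove intrinsic ultracontractivity $k_\mu(t,x,y)\le c_1 e^{\lambda_0 t}e^{c_2 t^{-b}}\psi(x)\psi(y)$, then use \eqref{eq:relaz-semigruppi} and the upper bound on $\psi$ from Proposition~\ref{primautofunz2} to conclude. The passage from $t\le 1$ to $t>1$ via the semigroup law and the spectral gap is also done exactly as you indicate.

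Where you and the paper diverge is in how the intrinsic ultracontractivity is obtained. You propose the ground-state transform $S(t)f=\psi^{-1}e^{-\lambda_0 t}e^{tA_\mu}(\psi f)$ and then a super-Poincar\'e/Nash inequality for the measure $\psi^2\,d\mu$, treating the derivation of the correct rate function as the ``main obstacle''. The paper bypasses this: it invokes Davies' criterion (\cite[\S\S4.4--4.5]{davies}) directly for the form $a_\mu$, which reduces everything to two elementary inequalities. The first is the Sobolev-type bound $\int g|u|^2\,d\mu\le C\|g\|_{L^{N/2}_\mu}a_\mu(u,u)$, obtained in one line from H\"older plus the Euclidean Sobolev inequality (using $d\mu\le dx$). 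The second is the Rosen-type inequality
\[
\int_{\R^N}(-\log\psi)|u|^2\,d\mu\le \eps\, a_\mu(u,u)+\bigl(C_1\eps^{-b}+C_2\bigr)\|u\|_{L^2_\mu}^2,
\]
and here the exponent $b$ drops out of a one-line computation: the lower bound on $\psi$ gives $-\log\psi\le C|x|^{\xi}+C'$ with $\xi=\tfrac{\beta-\alpha+2}{2}$, Young's inequality yields $|x|^{\xi}\le \eps|x|^{\beta}+C\eps^{-\xi/(\beta-\xi)}=\eps V(x)+C\eps^{-b}$, and finally $\int V|u|^2\,d\mu\le a_\mu(u,u)$ is immediate from the definition of $a_\mu$.

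So your plan would work, but the paper's route is strictly simpler: no ground-state transform, no analysis of the measure $\psi^2\,d\mu$, and the origin of $b=\tfrac{\beta-\alpha+2}{\beta+\alpha-2}$ is transparent --- it is precisely the Young exponent $\xi/(\beta-\xi)$. What you flag as the hard step is in fact a two-line computation once one sees that Davies' Rosen condition is the right target.
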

\begin{proof}
Let us prove first
\begin{equation}\label{estim-(0,1]}
k(t,x,y)\leq c_1e^{c_2t^{-b}}|x|^{-\frac{N-1}{2}-\frac{\beta-\alpha}{4}}
e^{-\frac{2}{\beta-\alpha+2}|x|^{\frac{\beta-\alpha+2}{2}}}
\frac{|y|^{-\frac{N-1}{2}-\frac{\beta-\alpha}{4}}}{1+|y|^\alpha}
e^{-\frac{2}{\beta-\alpha+2}|y|^{\frac{\beta-\alpha+2}{2}}}
\end{equation}
for $0<t \le 1,\,x,y\in \R^N\setminus B_1$.
By adapting the arguments used in \cite[Subsect. 4.4 and 4.5]{davies}, we have only to show the following estimates
\begin{equation}\label{ultra-mu}
\int_{\R^N}g|u|^2d\mu
	\leq C \|g\|_{L_\mu^{N/2}}a_{\mu}(u,u),\quad u\in D(a_\mu),\ g\in L^{N/2}_\mu ,
\end{equation}
and
\begin{equation}\label{ip-2-rosen}
 \int _{\R^N} -\log \psi |u|^2d\mu\leq \eps a_\mu(u,u)+(C_1\eps^{-b}+C_2)\|u\|^2_{L^2_\mu},\quad u\in D(a_\mu).
\end{equation}
To prove \eqref{ultra-mu} we observe that using  H\"older and Sobolev inequality we obtain
\begin{eqnarray*}
\int_{\R^N}g|u|^2d\mu &\leq & C\left(\int_{\R^N}|g|^{N/2} d\mu\right)^{2/N}
\left(\int_{\R^N}|u|^{\frac{2N}{N-2}} d\mu \right)^{\frac{N-2}{N}}\\
&=& C\| g\|_{L^{N/2}_\mu}\|u\|^2_{L^{2^*}_\mu}\\
&\leq & C\| g\|_{L^{N/2}_\mu}\|\nabla u\|^2_{2}
				\leq C\| g\|_{L^{N/2}_\mu} a_{\mu}(u,u), \quad u\in D(a_\mu).
\end{eqnarray*}

To show \eqref{ip-2-rosen}, we apply the lower estimate of $\psi$ obtained in Proposition \ref{primautofunz2}
\begin{eqnarray*}
 -\log\psi &\leq &-\left(\log C_1 -\frac{2}{\beta -\alpha +2}\right)+\frac{2N-2+\beta -\alpha}{4}\log |x| \\
 & & \quad +\frac{2}{\beta -\alpha +2}|x|^{\frac{\beta-\alpha+2}{2}}
\end{eqnarray*}
for $|x|\ge 1$.
Hence, there are positive constants $C_1,\,C_2$ such that
$$-\log\psi \leq C_1|x|^{\frac{\beta-\alpha+2}{2}}+C_2,\quad x\in \R^N.$$
Since $\xi=\frac{\beta-\alpha}{2}+1<\beta$ we have
\[
|x|^\xi\leq \eps |x|^\beta+C\eps^{-\frac{\xi}{\beta-\xi}}=\eps V(x)+C\eps^{-b}
\]
for all $\eps >0$.
Thus,
\[
 -\log\psi\leq \eps V+c_1\eps^{-b}+c_2.
\]
Taking into account that
$\int_{\R^N} V|u|^2d\mu\leq a_\mu(u,u)$ for all $u\in D(a_\mu)$, we obtain \eqref{ip-2-rosen}. This end the proof of \eqref{estim-(0,1]}.

It remains to prove that
\begin{equation}
k(t,x,y)\le Ce^{\lambda_0t}|x|^{-\frac{N-1}{2}-\frac{\beta-\alpha}{4}} 
e^{-\frac{2}{\beta-\alpha+2}|x|^{\frac{\beta-\alpha+2}{2}}}
\frac{|y|^{-\frac{N-1}{2}-\frac{\beta-\alpha}{4}}}{1+|y|^\alpha}
e^{-\frac{2}{\beta-\alpha+2}|y|^{\frac{\beta-\alpha+2}{2}}}
\label{tge1}
\end{equation}
for $t>1,\,x,y\in\R^N\setminus B_1$ and some constant $C>0$.
To this purpose we use the semigroup law and the symmetry of $k_\mu(t,\cdot ,\cdot)$ to infer that
\begin{eqnarray*}
k_{\mu}(t,x,y)=\int_{\R^N}k_{\mu}(t-1/2,x,z)k_{\mu}(1/2,y,z)d\mu(z),\qquad\,\,t>1/2,\;\,x,y\in\R^N.
\end{eqnarray*}
By \eqref{estim-(0,1]}, the function $k_{\mu}(1/2,y,\cdot)$ belongs to $L^2_{\mu}$. Hence,
\begin{eqnarray*}
k_{\mu}(t,x,y)=(e^{(t-\frac{1}{2})A_{\mu}}k_{\mu}(1/2,y,\cdot))(x),\qquad\;\,t>1/2,\;\,x,y\in\R^N.
\end{eqnarray*}
Using again the semigroup law and the symmetry we deduce that
\begin{align*}
k_{\mu}(t,x,x)=&\int_{\R^N}|k_{\mu}(t/2,x,y)|^2d\mu(y)\\
\le & Me^{\lambda_0(t-1)}\|k_{\mu}(1/2,x,\cdot)\|_{L^2_{\mu}}^2\\
= & Me^{\lambda_0(t-1)}k_{\mu}(1,x,x).
\end{align*}
So, by applying \eqref{estim-(0,1]} to $k_{\mu}(1,x,x)$ and using the inequality $$k_{\mu}(t,x,y)\le (k_{\mu}(t,x,x))^{1/2}(k_{\mu}(t,y,y))^{1/2},$$ one obtains \eqref{tge1}.
%
\end{proof}

\begin{remark}
It follows from Proposition \ref{on-diagonal} that the estimates obtained for the heat kernel $k$ in Theorem \ref{th:stima-kernel-via-autofunz} could be sharp in the space variables but certainly not in the time variable as we will prove in Proposition \ref{sharp-t}.
\end{remark}
\begin{remark}
In the above proof we use the Sobolev inequality
$$\|u\|^2_{L^{2^*}_\mu}\le C\|\nabla u\|_2^2$$
which holds in $D(a_\mu)$ but not in $H$ (consider for example the case where $\alpha>\beta+N$ and $u=1$).
\end{remark}

As a consequence of Theorem \ref{th:stima-kernel-via-autofunz} we deduce some estimates for the eigenfunctions.
\begin{corollary}
If the assumptions of Theorem \ref{th:stima-kernel-via-autofunz} hold, then all normalized eigenfunctions $\psi_j$ of $A_2$ satisfy
\begin{eqnarray*}
|\psi_j(x)|\le C_j|x|^{\frac{\alpha-\beta}{4}-\frac{N-1}{2}}
e^{-\frac{2}{\beta-\alpha+2}|x|^{\frac{\beta-\alpha+2}{2}}},
\end{eqnarray*}
for all $x\in\R^N\setminus B_1$, $j\in\N$ and a constant $C_j>0$.
\end{corollary}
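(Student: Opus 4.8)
The plan is to bootstrap the heat kernel bound from Theorem~\ref{th:stima-kernel-via-autofunz} to all eigenfunctions via the eigenvalue equation and the spectral representation of the semigroup. The starting point is that each normalized eigenfunction $\psi_j\in L^2_\mu$ of $A_2$ (equivalently of $A_\mu$, by Lemma~\ref{coerenza1}) satisfies $e^{tA_\mu}\psi_j=e^{\lambda_j t}\psi_j$, where $\lambda_j\le\lambda_0<0$. Writing this through the kernel representation $e^{tA_\mu}f(x)=\int_{\R^N}k_\mu(t,x,y)f(y)\,d\mu(y)$, one gets, for every fixed $t>0$,
\begin{eqnarray*}
|\psi_j(x)|=e^{-\lambda_j t}\left|\int_{\R^N}k_\mu(t,x,y)\psi_j(y)\,d\mu(y)\right|
\le e^{-\lambda_j t}\,\|k_\mu(t,x,\cdot)\|_{L^2_\mu}\,\|\psi_j\|_{L^2_\mu},
\end{eqnarray*}
by Cauchy--Schwarz, and $\|\psi_j\|_{L^2_\mu}=1$.

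Next I would evaluate $\|k_\mu(t,x,\cdot)\|_{L^2_\mu}$ using the semigroup law and symmetry, exactly as in the proof of Theorem~\ref{th:stima-kernel-via-autofunz}: $\|k_\mu(t,x,\cdot)\|_{L^2_\mu}^2=k_\mu(2t,x,x)$. Fixing a convenient value of $t$, say $t=1/2$, this equals $k_\mu(1,x,x)=(1+|x|^\alpha)k(1,x,x)$, and Theorem~\ref{th:stima-kernel-via-autofunz} (with $t=1$, applied on the diagonal $x=y$) bounds $k(1,x,x)$ by
\begin{eqnarray*}
k(1,x,x)\le c_1 e^{\lambda_0+c_2}\,\frac{|x|^{-\frac{N-1}{2}-\frac{\beta-\alpha}{4}}}{1+|x|^\alpha}\,|x|^{-\frac{N-1}{2}-\frac{\beta-\alpha}{4}}\,
e^{-\frac{4}{\beta-\alpha+2}|x|^{\frac{\beta-\alpha+2}{2}}}
\end{eqnarray*}
for $x\in\R^N\setminus B_1$. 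Hence $k_\mu(1,x,x)\le C\,|x|^{-(N-1)-\frac{\beta-\alpha}{2}}\,e^{-\frac{4}{\beta-\alpha+2}|x|^{\frac{\beta-\alpha+2}{2}}}$, so that $\|k_\mu(1/2,x,\cdot)\|_{L^2_\mu}\le C^{1/2}\,|x|^{\frac{\alpha-\beta}{4}-\frac{N-1}{2}}\,e^{-\frac{2}{\beta-\alpha+2}|x|^{\frac{\beta-\alpha+2}{2}}}$. Plugging this into the Cauchy--Schwarz estimate above with $t=1/2$ and absorbing the constant $e^{-\lambda_j/2}\,C^{1/2}$ into $C_j$ yields the claimed bound for $x\in\R^N\setminus B_1$.

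I do not expect a serious obstacle here; the argument is a routine duality/spectral bootstrap once Theorem~\ref{th:stima-kernel-via-autofunz} is available. The only points requiring a little care are: (i) justifying that $k_\mu(1/2,x,\cdot)\in L^2_\mu$ so that Cauchy--Schwarz and the identity $\|k_\mu(t,x,\cdot)\|_{L^2_\mu}^2=k_\mu(2t,x,x)$ are legitimate — but this follows from the estimate \eqref{estim-(0,1]} established inside that proof, just as in the $t>1$ part of that argument; (ii) noting that $\psi_j\in L^2_\mu$ is indeed an $A_\mu$-eigenfunction, which is immediate from the $p=2$ case of the identification of resolvents in Lemma~\ref{coerenza1}; and (iii) that the constant $C_j$ genuinely depends on $j$ only through $e^{-\lambda_j/2}$, which is harmless since we only claim a $j$-dependent constant. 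If one instead wanted a uniform statement, one could keep the factor $e^{-\lambda_j t}$ and optimize in $t$, but that is not needed for the corollary as stated.
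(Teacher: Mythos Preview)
Your proposal is correct and follows essentially the same route as the paper's proof: write $e^{\lambda_j t}\psi_j(x)=\int_{\R^N}k_\mu(t,x,y)\psi_j(y)\,d\mu(y)$, apply Cauchy--Schwarz, use the identity $\|k_\mu(t,x,\cdot)\|_{L^2_\mu}^2=k_\mu(2t,x,x)$ from the semigroup law and symmetry, and bound the diagonal via Theorem~\ref{th:stima-kernel-via-autofunz}. One harmless slip: since ``normalized'' here means $\|\psi_j\|_{L^2(\R^N)}=1$, you only get $\|\psi_j\|_{L^2_\mu}\le 1$, not equality, but the inequality goes the right way.
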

\begin{proof}
Let $\lambda_j$ be an eigenvalue of $A_2$ and denote by $\psi_j$ any normalized (i.e. $\|\psi_j\|_{L^2(\R^N)}$ $=1$) eigenfunction associated to $\lambda_j$. Then, as in the proof of Theorem \ref{th:stima-kernel-via-autofunz}, we have
\begin{align*}
e^{\lambda_j t}|\psi_j(x)|=&\left|\int_{\R^N}k_\mu(t,x,y)\psi_j(y)\,d\mu(y)\right|\\
\le & \left(\int_{\R^N}k_\mu(t,x,y)^2d\mu(y)\right)^{\frac{1}{2}}\|\psi_j\|_{L^2_\mu}\\
= & (k_\mu(2t,x,x))^{\frac{1}{2}},
\end{align*}
for $t>0$ and $x\in\R^N$. So, the estimates follow from Theorem \ref{th:stima-kernel-via-autofunz}.
\end{proof}

\begin{remark}
It is possible to obtain better estimates of the kernels $k$ with respect to the time variable $t$ for small $t$. In fact if we denote by $S(\cdot)$ the semigroup generated by $(1+|x|^\alpha)\Delta$ in $C_b(\R^N)$, which is given by a kernel $p$, then by domination we have $0<k(t,x,y)\le p(t,x,y)$ for $t>0$ and $x,\,y\in \R^N$. So, by \cite[Theorem 2.6 and Theorem 2.14]{met-spi3}, it follows that
\begin{eqnarray}\label{eq-improved}
k(t,x,y) &\le & Ct^{-N/2}(1+|x|)^{2-N}(1+|y|)^{2-N-\alpha},\quad \alpha>4, \\
k(t,x,y) &\le & Ct^{-N/2}(1+|x|^\alpha)^{\frac{2-N}{4}}(1+|y|^\alpha)^{\frac{2-N}{4}-1},\quad 2<\alpha\le 4 \nonumber
\end{eqnarray}
for $0<t\le 1,\,x,\,y\in \R^N$.
\end{remark}

Using a domination argument and \cite[Proposition 2.10]{met-spi3} we can improve the estimate \eqref{eq-improved}.

\begin{proposition}\label{sharp-t}
If $\alpha \ge 2,\,\beta>\alpha -2$ and $N>2$, then the kernel $k_\mu$ satisfies
$$k_\mu(t,x,y)\le Ct^{-N/2}(1+|x|^\alpha)^{\frac{2-N}{4}}(1+|y|^\alpha)^{\frac{2-N}{4}}$$
for $0<t\le 1$ and $x,\,y\in \R^N$.
\end{proposition}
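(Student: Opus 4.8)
The plan is to reduce the estimate for $k_\mu$ to an analogous short--time estimate for the \emph{potential--free} operator $(1+|x|^\alpha)\Delta$ by a domination argument, and then to quote that estimate from \cite{met-spi3}. The observation that makes this work is that dropping the nonnegative potential $V(x)=|x|^\beta$ only enlarges the kernel, while the behaviour in $t$ for small $t$ is already governed by the diffusion part.

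First I would invoke the domination already recalled in the remark preceding the statement: since $V\ge0$, the heat kernel $p$ of the semigroup $S(\cdot)$ generated by $(1+|x|^\alpha)\Delta$ in $C_b(\R^N)$ satisfies $0<k(t,x,y)\le p(t,x,y)$ for $t>0$ and $x,y\in\R^N$. Multiplying by $(1+|y|^\alpha)$, using \eqref{eq:relaz-semigruppi} and the analogous identity $p_\mu(t,x,y)=(1+|y|^\alpha)p(t,x,y)$ for the kernel of $S(\cdot)$ with respect to $d\mu$ (which follows by the argument of Lemma \ref{coerenza1}), one gets $0<k_\mu(t,x,y)\le p_\mu(t,x,y)$. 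Thus it suffices to prove
\[
p_\mu(t,x,y)\le C\,t^{-N/2}\,(1+|x|^\alpha)^{\frac{2-N}{4}}\,(1+|y|^\alpha)^{\frac{2-N}{4}},\qquad 0<t\le1,\ x,y\in\R^N,
\]
which is exactly \eqref{eq-improved} improved so as to be symmetric in $x$ and $y$ and to cover all $\alpha\ge2$ (in particular it is genuinely stronger than \eqref{eq-improved} for $\alpha>4$).

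For this bound I would appeal to \cite[Proposition 2.10]{met-spi3}, where this type of weighted short--time estimate for $(1+|x|^\alpha)\Delta$ is established; the engine behind it is a weighted Nash inequality for the Dirichlet form $\int_{\R^N}|\nabla u|^2\,dx$ on $L^2_\mu$, in the spirit of \cite{bakry}, with a weight of order $(1+|x|^\alpha)^{(N-2)/4}$, the (weighted) Nash--Moser iteration then delivering the pointwise bound above. Consequently the main obstacle is entirely concentrated in that weighted Nash inequality: fixing the correct exponent $(N-2)/4$ in the weight, proving the inequality (this is precisely where $\alpha\ge2$ and $N>2$ are used) and checking that it forces the symmetric factor $(1+|x|^\alpha)^{(2-N)/4}(1+|y|^\alpha)^{(2-N)/4}$ — all of which we would simply import from \cite{met-spi3}. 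On the side of the present paper nothing but routine verifications remain, namely the domination $k\le p$, which is granted by the remark, and the relation $p_\mu=(1+|y|^\alpha)p$, the potential--free analogue of \eqref{eq:relaz-semigruppi}. Finally one should note, in view of Proposition \ref{on-diagonal}, that this bound captures the sharp time behaviour $t^{-N/2}$ as $t\to0^+$, which is the point of the statement.
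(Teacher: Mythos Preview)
Your reduction via the kernel domination $k_\mu\le p_\mu$ is sound, and it settles the range $2\le\alpha\le4$ immediately (this is precisely the second line of \eqref{eq-improved} after multiplying by $1+|y|^\alpha$). The gap is the case $\alpha>4$, which is in fact the only case the paper needs to argue. There the estimate you want for $p_\mu$ is simply not available: the remark preceding the proposition already records the best known bound for the potential--free kernel from \cite{met-spi3}, namely $p_\mu(t,x,y)\le Ct^{-N/2}(1+|x|)^{2-N}(1+|y|)^{2-N}$, and this is strictly weaker than $(1+|x|^\alpha)^{(2-N)/4}(1+|y|^\alpha)^{(2-N)/4}$ when $\alpha>4$. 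The reference you invoke, \cite[Proposition~2.10]{met-spi3}, is a weighted Nash inequality, not a kernel estimate; and the weight $\varphi(x)=(1+|x|^\alpha)^{(2-N)/4}$ is \emph{not} a Lyapunov function for $(1+|x|^\alpha)\Delta$ when $\alpha>4$, since $(1+|x|^\alpha)\Delta\varphi\sim\gamma(\gamma+N-2)\,|x|^{\alpha-2}\varphi$ with $\gamma=\alpha(2-N)/4<2-N$, and $\gamma(\gamma+N-2)>0$. So the machinery of \cite{bakry} cannot produce the desired bound for $p_\mu$.

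The paper proceeds differently, and the potential is used essentially. Domination is applied at the level of \emph{forms}, not kernels: since $a_\mu(u,u)\ge\int_{\R^N}|\nabla u|^2\,dx$, the weighted Nash inequality of \cite[Proposition~2.10]{met-spi3} transfers from the potential--free form to $a_\mu$. One then appeals to \cite[Corollary~2.8]{bakry}, which converts the Nash inequality into the stated kernel bound \emph{provided} $\varphi$ is a Lyapunov function for the full operator $A$. Here the term $-|x|^\beta$ is decisive: it absorbs the positive $|x|^{\alpha-2}$ growth of $(1+|x|^\alpha)\Delta\varphi$ exactly because $\beta>\alpha-2$, giving $A\varphi\le\kappa\varphi$. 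Thus the short--time improvement for $\alpha>4$ genuinely comes from the potential and cannot be captured by dominating with the potential--free kernel.
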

\begin{proof}
It suffices to consider the case $\alpha >4$.\\
By domination one sees that weighted Nash inequalities given in \cite[Proposition 2.10]{met-spi3} hold for the quadratic form $a_\mu$. Hence, by \cite[Corollary 2.8]{bakry}, the results is proved provided that the function $\varphi(x)=(1+|x|^\alpha)^{\frac{2-N}{4}}$ is a Lyapunov function in the sense of \cite[Definition 2.1]{met-spi3}.\\
A simple computation yields
\begin{align*}
&A\varphi =\left[ \gamma(N+\alpha-2)|x|^{\alpha-2}+\gamma(\gamma-\alpha)|x|^{\alpha-2}\frac{|x|^\alpha}{1+|x|^\alpha} \right]\varphi (x)\\
&\quad =\left[ \gamma \left( \gamma+N-2 \right) \frac{|x|^{2\alpha-2}}{1+|x|^\alpha}
 +\gamma(\alpha-2+N) \frac{|x|^{\alpha-2}}{1+|x|^\alpha}
 -|x|^\beta \right]\varphi(x)\\
 &\quad \leq \left[
 \gamma \left( \gamma+N-2 \right) \frac{|x|^{2\alpha-2}}{1+|x|^\alpha}
 -|x|^\beta \right]\varphi(x),
\end{align*}
where $\gamma :=\frac{\alpha(2-N)}{4}$. We note that $\gamma <2-N$, since $\alpha >4$. Now, using the fact that $\beta >\alpha -2$, we deduce that
\begin{align*}
& \gamma \left( \gamma+N-2 \right) \frac{|x|^{2\alpha-2}}{1+|x|^\alpha} \leq
\gamma \left( \gamma+N-2 \right)|x|^{\alpha-2}
 \leq |x|^\beta +\kappa
\end{align*}
for some $\kappa>0$. Thus, $A\varphi \le \kappa \varphi$. Using the same arguments as in
\cite[Lemma 2.13]{met-spi3} we obtain that $\varphi$ is Lyapunov function for $A$.
\end{proof}

As in \cite{lor-rhan} heat kernel estimates can be also obtained for a more general class of elliptic operators.

Let us consider the operator $B$, defined on smooth functions $u$ by
\begin{eqnarray*}
Bu=(1+|x|^\alpha)\sum_{j,k=1}^ND_k(a_{kj}D_ju)-Wu,
\end{eqnarray*}
under the following set of assumptions:

\begin{hyp}\label{hyp-1}

\noindent
\begin{enumerate}
\item
the coefficients $a_{kj}=a_{jk}$ belong to $C_b(\R^N)\cap W^{1,\infty}_{\rm loc}(\R^N)$ for any $j,k=1,\ldots,N$
and there exists a positive constant $\eta$ such that
\begin{eqnarray*}
\eta |\xi|^2  \le \sum_{j,k=1}^Na_{kj}(x)\xi_k\xi_j,\qquad\;\,x,\xi \in \R^N;
\end{eqnarray*}
\item
$W\in L^1_{\rm loc}(\R^N)$ satisfies $W(x)\ge |x|^\beta$  for any $x\in \R^N$ and some $\beta>\alpha -2$;
\item
$\alpha\ge 2$ and $D_j a_{kj}(x) = {\rm o}(|x|^{\frac{\beta-\alpha}{2}})$ as $|x| \to \infty$.
\end{enumerate}
\end{hyp}

On $L^2_\mu$ we define the bilinear form
\begin{align*}
b_\mu(u,v)=\sum_{j,k=1}^N\int_{\R^N}a_{kj}D_kuD_j\overline{v}\,dx+\int_{\R^N}Wu\overline{v}\,d\mu,\qquad\;\,u,v\in D(b_{\mu}),
\end{align*}
where $D(b_{\mu})=\overline{C_c^\infty(\R^N)}^{\|\cdot \|_{\mathcal{H}}}$ with $\mathcal{H}$ the Hilbert space
$$\mathcal{H}=\{u\in L^2_{\mu}: W^{1/2}u\in L^2_{\mu},\,\nabla u\in (L^2(\R^N))^N\}.$$
Since $b_{\mu}$ is a symmetric, accretive and closable form, we can associate a positive strongly continuous semigroup $S_\mu(\cdot)$ in $L^2_{\mu}$.
The same arguments as in the beginning of this section show
that the infinitesimal generator $B_{\mu}$ of this semigroup is the realization in $L^2_{\mu}$
of the operator $B$ with domain $D(B_{\mu})=\{u\in D(b_{\mu})\cap W^{2,2}_{\rm loc}(\R^N): Bu\in L^2_{\mu}\}$.
Let us denotes by $p_\mu$ the heat kernel associated to $S_\mu(\cdot)$.

We will also need the bilinear form
\begin{eqnarray*}
a_{\mu,\theta}(u,v)=\int_{\R^N}\nabla u\cdot\nabla\overline{v}\,dx+\theta^2\int_{\R^N}Vu\overline{v}d\mu,\qquad\;\,u,v\in D(a_{\mu,\theta})=
D(a_{\mu}).
\end{eqnarray*}
The same arguments as in the proof of Theorem \ref{th:stima-kernel-via-autofunz} can be used to show that the kernel $k_{\mu,\theta}$ of the analytic semigroup associated to the form $a_{\mu,\theta}$ in $L^2_{\mu}$ satisfies
\begin{equation}
0<k_{\mu,\theta}(t,x,y)\le  K_{\theta}e^{\lambda_{0,\theta}t}e^{\tilde c_{\theta}t^{-b}}\psi_{\theta}(x)\psi_{\theta}(y),\qquad t>0,\,x,y\in\R^N,
\label{kernel-estim-theta}
\end{equation}
where $\tilde c_{\theta}$ and $K_{\theta}$ are positive constants, $\lambda_{0,\theta}$
is the largest (negative) eigenvalue of the minimal realization of
operator $A_{\theta}:=(1+|x|^\alpha)\Delta -\theta|x|^\beta$ in $L^2(\R^N)$, and $\psi_{\theta}$ is a corresponding
positive and bounded eigenfunction. Moreover, there exist $C_{1,\theta},\,C_{2,\theta}>0$ such that
\begin{eqnarray*}
C_{1,\theta}\le |x|^{-\frac{\alpha-\beta}{4}+\frac{N-1}{2}}
e^{-\theta\frac{2}{\beta-\alpha+2}|x|^{\frac{\beta-\alpha+2}{2}}}
\psi_{\theta}(x)\le C_{2,\theta},
\end{eqnarray*}
for any $x\in\R^N\setminus B_1$.

Using Theorem \ref{th:stima-kernel-via-autofunz} and arguing as in \cite{ouha-rhan} and \cite[Theorem 3.9]{lor-rhan} we obtain the following heat kernel estimate.
\begin{theorem}
\label{thm-ultimo}
Assume that Hypotheses $\ref{hyp-1}$ are satisfied and
let
\begin{eqnarray*}
\Lambda:=\sup_{x,\xi\in\R^N\setminus\{0\}}|\xi|^{-2}\sum_{j,k=1}^Na_{kj}(x)\xi_k\xi_j.
\end{eqnarray*}
Then, for any $\theta\in (0,\Lambda^{-1/2})$, we have
\begin{align*}
p_\mu(t,x,y)\le &M_{\theta}e^{\lambda_{0,\theta}t}e^{c_{\theta}t^{-b}}(|x||y|)^{\frac{\alpha-\beta}{4}-\frac{N-1}{2}}\notag\\
&\qquad\times
e^{-\theta\frac{2}{\beta-\alpha+2}|x|^{\frac{\beta-\alpha+2}{2}}}
e^{-\theta\frac{2}{\beta-\alpha+2}|y|^{\frac{\beta-\alpha+2}{2}}}
\end{align*}
for any $t>0$ and $x,y\in\R^N\setminus B_1$, where $M_{\theta}$, $c_{\theta}$ are positive constants, $b=\frac{\beta-\alpha+2}{\beta+\alpha-2}$, and
$\lambda_{0,\theta}$ is the largest eigenvalue of the operator $(1+|x|^\alpha)\Delta -\theta|x|^\beta$.
\end{theorem}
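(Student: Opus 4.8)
The plan, following \cite{ouha-rhan} and \cite[Theorem 3.9]{lor-rhan}, is to reduce everything to the model estimate \eqref{kernel-estim-theta} (together with the accompanying two-sided bound on $\psi_\theta$) in two steps: a potential domination replacing $W$ by $|x|^\beta$, and a comparison of ground states. Since $W\ge|x|^\beta=V$ and the forms of $B_\mu$ and of the operator with potential $|x|^\beta$ differ only in the zeroth-order term, Ouhabaz's criterion for domination of positivity-preserving semigroups gives $0<p_\mu(t,x,y)\le\tilde p_\mu(t,x,y)$ for all $t>0$ and $x,y\in\R^N$, where $\tilde p_\mu$ is the heat kernel in $L^2_\mu$ of $\tilde B:=(1+|x|^\alpha)\sum_{j,k}D_k(a_{kj}D_j\cdot)-|x|^\beta$. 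So it suffices to estimate $\tilde p_\mu$.

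Next I would rerun, for $\tilde B$, the intrinsic-ultracontractivity argument of Theorem \ref{th:stima-kernel-via-autofunz}. Its two structural inputs survive with the ellipticity constant $\eta$ in place of the coefficient $1$: the weighted ultracontractivity \eqref{ultra-mu} uses only the Sobolev inequality together with $\|\nabla u\|_2^2\le\eta^{-1}\tilde b_\mu(u,u)$ (here $\tilde b_\mu$ is the form of $\tilde B$); and the logarithmic estimate \eqref{ip-2-rosen} uses that the ground state $\psi_{\tilde B}$ -- which exists and satisfies the analogue of the spectral properties of Proposition \ref{comp} -- does not decay faster than $\exp(-C|x|^{\xi})$, $\xi=\frac{\beta-\alpha}{2}+1<\beta$ (a consequence of $\langle a\zeta,\zeta\rangle\ge\eta|\zeta|^2$), so that $-\log\psi_{\tilde B}\le C|x|^{\xi}+C'\le\eps V+C\eps^{-b}+C'$ while $\int V|u|^2\,d\mu\le\tilde b_\mu(u,u)$. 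As in \cite{ouha-rhan} this yields
\[
\tilde p_\mu(t,x,y)\le c_1e^{\lambda_0^{\tilde B}t}e^{c_2t^{-b}}\psi_{\tilde B}(x)\psi_{\tilde B}(y),\qquad t>0,\ x,y\in\R^N,
\]
with $\lambda_0^{\tilde B}$ the largest eigenvalue of $\tilde B$.

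The last step is the bound $\psi_{\tilde B}(x)\le C\psi_\theta(x)$ for $|x|\ge1$ and $\theta\in(0,\Lambda^{-1/2})$, obtained as in the proof of Proposition \ref{primautofunz2}, now for $\sum_{j,k}D_k(a_{kj}D_j\cdot)$. Since $\psi_\theta$ solves $(1+|x|^\alpha)\Delta\psi_\theta-\theta^2|x|^\beta\psi_\theta=\lambda_{0,\theta}\psi_\theta$ one has $\psi_\theta''>0$ for $|x|$ large, whence, bounding the second-order part by $\langle a(x)\zeta,\zeta\rangle\le\Lambda|\zeta|^2$ and using Hypotheses \ref{hyp-1}(3) to absorb the first-order terms $\sum(D_ka_{kj})D_j\psi_\theta=o(|x|^{\beta-\alpha})\psi_\theta$,
\[
\tilde B\psi_\theta\le(\Lambda\theta^2-1+o(1))\,|x|^\beta\psi_\theta\le\lambda_0^{\tilde B}\psi_\theta
\]
for $|x|$ large -- the first inequality being exactly where $\Lambda\theta^2<1$ enters. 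Hence a suitable multiple of $\psi_\theta$ is a supersolution of $\tilde B-\lambda_0^{\tilde B}$ outside a large ball, it dominates $\psi_{\tilde B}$ on the boundary sphere, and both tend to $0$ at infinity, so the maximum principle (exactly as in Proposition \ref{primautofunz2}) gives $\psi_{\tilde B}\le C\psi_\theta$. Inserting this and then the two-sided estimate on $\psi_\theta$ into the previous display, and absorbing constants, yields a bound of the asserted form, with $e^{\lambda_0^{\tilde B}t}$ in place of $e^{\lambda_{0,\theta}t}$; identifying the rate with the $\lambda_{0,\theta}$ of the statement is then a matter of comparing first eigenvalues as in \cite{lor-rhan}.

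The main obstacle is the combination of the second and third steps: both rest on a priori decay control of $\psi_{\tilde B}$ and of $\psi_\theta$ under the genuinely variable-coefficient operator $\sum_{j,k}D_k(a_{kj}D_j\cdot)$, and this is precisely where Hypotheses \ref{hyp-1}(3) -- which makes the drift $o(|x|^{(\beta-\alpha)/2})|\nabla\cdot|$ of lower order than the potential term -- and the restriction $\theta<\Lambda^{-1/2}$ -- which produces $\Lambda\theta^2<1$ in the supersolution inequality -- are indispensable. All remaining ingredients are already in hand (the model estimate \eqref{kernel-estim-theta}, the two-sided bound on $\psi_\theta$, the potential domination) or amount to a transcription of the proofs of Theorem \ref{th:stima-kernel-via-autofunz} and Proposition \ref{primautofunz2} and of the arguments in \cite{ouha-rhan} and \cite{lor-rhan}.
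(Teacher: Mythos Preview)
Your route is genuinely different from the paper's. You pass through the ground state $\psi_{\tilde B}$ of the variable-coefficient operator $\tilde B$: first establish intrinsic ultracontractivity $\tilde p_\mu(t,x,y)\le c(t)\psi_{\tilde B}(x)\psi_{\tilde B}(y)$, then the pointwise comparison $\psi_{\tilde B}\le C\psi_\theta$ via the supersolution argument. The paper never introduces $\psi_{\tilde B}$. Instead it conjugates the semigroup $e^{tB_\mu}$ directly by the \emph{explicit} function $\phi_\theta\approx\psi_\theta$ (coming from the model operator $A_\theta$) and proves a Log-Sobolev inequality for the conjugated form $\tilde b_\mu(u,u)=b_\mu(\phi_\theta u,\phi_\theta u)$; this Log-Sobolev is obtained from a form comparison $\tilde b_\mu\ge c\,\tilde a_{\mu,\theta}$ together with the already-known kernel bound \eqref{kernel-estim-theta} for $a_{\mu,\theta}$, and the condition $\Lambda\theta^2<1$ enters when showing that the conjugated semigroups are bounded on $L^\infty$ (i.e.\ that $B\phi_\theta\le C_1\phi_\theta$), which is essentially the same supersolution computation you carry out for $\psi_\theta$.

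The practical difference is that the paper's form-comparison route needs nothing about the spectrum of $\tilde B$: no compactness of the resolvent, no existence, simplicity or positivity of a ground state, and in particular no \emph{lower} bound on $\psi_{\tilde B}$. Your Step~2, by contrast, requires $-\log\psi_{\tilde B}\le C|x|^{\xi}+C'$, which you attribute to the lower ellipticity bound but do not actually prove; the WKB/ODE method of Proposition~\ref{primautofunz2} is unavailable because $\tilde B$ is not radial, so one would need a separate subsolution argument (e.g.\ with $\psi_{\tilde\theta}$ for some $\tilde\theta>\eta^{-1/2}$, again using Hypotheses~\ref{hyp-1}(3) to control the drift) after first establishing that $\tilde B_\mu$ has compact resolvent and a strictly positive principal eigenfunction. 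All of this can be carried out, and your supersolution computation for the upper bound $\psi_{\tilde B}\le C\psi_\theta$ is essentially correct (note, though, that $\mathrm{tr}(aD^2\psi_\theta)$ is not of the form $\langle a\zeta,\zeta\rangle$; one must split the Hessian into its radial part $\psi_\theta''\langle a\hat x,\hat x\rangle$ and a lower-order tangential part before invoking $\langle a\hat x,\hat x\rangle\le\Lambda$). So your programme is viable, but it incurs a nontrivial detour through the spectral theory of $\tilde B$ that the paper's approach avoids entirely by staying at the level of quadratic forms.
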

\begin{proof}
For the reader's convenience, we give the main ideas of the proof.

Proving the above estimate is equivalent to showing that
\begin{equation}\label{eq-new}
\phi_\theta(x)^{-1}p_\mu(t,x,y)\phi_\theta(y)^{-1}\le M_\theta e^{\lambda_{0,\theta}t}e^{c_{\theta}t^{-b}},\quad t>0,\,x,y\in \R^N,
\end{equation}
where $\phi_\theta$ is any smooth function satisfying
$$\phi_\theta(x)=|x|^{\frac{\alpha-\beta}{4}-\frac{N-1}{2}}
e^{-\theta\frac{2}{\beta-\alpha+2}|x|^{\frac{\beta-\alpha+2}{2}}},\quad x\in \R^N\setminus B_1.$$
If we denote by $T_{\phi_\theta}:L^2_{\phi_\theta^2\mu}\to L^2_\mu$ the isometry defined by $T_{\phi_\theta}f=\phi_\theta f$, then the left hand side of \eqref{eq-new} is the kernel of the semigroup $(T_{\phi_\theta}^{-1}e^{tB_{\mu}}T_{\phi_\theta})_{t\ge 0}$ in $L^2_{\phi_\theta^2\mu}$.
It is clear that this semigroup is associated with the form $\tilde{b}_\mu(u,v)=b_\mu(\phi_\theta u,\phi_\theta v)$ for $u,\,v\in D(\tilde{b}_\mu):=\{u\in L^2_{\phi_\theta^2\mu}:\phi_\theta u\in D(b_\mu)\}$.

As in the proof of Theorem \ref{th:stima-kernel-via-autofunz}, it suffices to establish \eqref{eq-new} for $t\in (0,1]$. To this purpose one has to prove, as in the proof of \cite[Theorem 3.9]{lor-rhan}, the following assertions:
\begin{enumerate}
\item[(i)]
$\min\{u,1\}\in D(\tilde{a}_{\mu,\theta})$ (resp. $D(\tilde{b}_{\mu})$) for any nonnegative $u\in D(\tilde{a}_{\mu,\theta})$ (resp.
$D(\tilde{b}_{\mu})$);
\item[(ii)]
the semigroup $(T_{\phi_\theta}^{-1}e^{tB_{\mu}}T_{\phi_\theta})_{t\ge 0}$ and the semigroup
$(T_{\phi_\theta}^{-1}e^{tA_{\mu ,\theta}}T_{\phi_\theta})_{t\ge 0}$, associated to the form $\tilde{a}_{\mu,\theta}=a_{\mu,\theta}(\phi_\theta\cdot,\phi_\theta\cdot)$
with domain $D(\tilde{a}_{\mu,\theta})=D(\tilde{b}_{\mu})$, are positive, they map $L^{\infty}(\R^N)$ into itself and satisfy the estimates
\begin{eqnarray*}
\|T_{\phi_\theta}^{-1}e^{tB_{\mu}}T_{\phi_\theta}\|_{L(L^{\infty}(\R^N))}\le e^{C_1t},\;\,
\|T_{\phi_\theta}^{-1}e^{tA_{\mu ,\theta}}T_{\phi_\theta}\|_{L(L^{\infty}(\R^N))}\le e^{C_1t},\;\,t>0,
\end{eqnarray*}
for some positive constant $C_1$;
\item[(iii)]
the Log-Sobolev inequality
\begin{align}
\;\;\int_{\R^N}u^2(\log u)\phi_\theta^2d\mu\le\varepsilon\tilde{b}_{\mu}(u,u)
+\|u\|_{L^2_{\phi_\theta^2\mu}}^2\log\|u\|_{L^2_{\phi_\theta^2\mu}}+c_\theta(1+\varepsilon^{-b})\|u\|_{L^2_{\phi_\theta^2\mu}}^2
\label{LSI}
\end{align}
holds true for any nonnegative $u\in D(\tilde{b}_{\mu})\cap L^1_{\phi_\theta^2\mu}\cap L^{\infty}(\R^N)$, where $c_\theta$ is the constant in
\eqref{eq-new}.
\end{enumerate}
So, applying \eqref{LSI} and combining \cite[Lemma 2.1.2, Cor. 2.2.8 and Ex. 2.3.4]{davies},
estimate \eqref{eq-new} follows with $t\in (0,1]$.

The proof of (i), (ii) and (iii) is similar to the one in \cite[Theorem 3.9]{lor-rhan}. The proof of (iii) is based on the estimate
$\tilde{b}_{\mu}(u,u)\ge \min\{\mu,\theta^{-1}\}\tilde{a}_{\mu,\theta}(u,u)$ which holds for any $u\in D(\tilde{b}_{\mu})
\subset D(\tilde{a}_{\mu,\theta})$, and \eqref{kernel-estim-theta}.
\end{proof}

\bibliography{bibfile}{}
\bibliographystyle{amsplain}

\end{document}